\newtheorem{proposition}{Proposition}
\newtheorem{theorem}[proposition]{Theorem}
\newtheorem{lemma}[proposition]{Lemma}
\newtheorem{corollary}[proposition]{Corollary}
\theoremstyle{definition}
\newtheorem{remark}[proposition]{Remark}
\numberwithin{equation}{section}
\numberwithin{proposition}{section}
\newcommand{\N}{\mathbb{N}}
\newcommand{\R}{\mathbb{R}}
\renewcommand{\H}{\mathbb{H}}
\renewcommand{\P}{\mathbb{P}}
\newcommand{\Zd}{\mathbb{Z}^d}
\newcommand{\Rd}{{\mathbb{R}^d}}
\DeclareMathOperator{\range}{ran}
\newcommand{\sym}{\mathrm{sym}}
\newcommand{\indc}{1}
\renewcommand{\a}{\mathbf{a}}
\newcommand{\T}{\mathbf{T}}
\newcommand{\ahom}{\overline{\a}}
\newcommand{\A}{\mathbb{A}}
\newcommand{\Ahom}{\overline{\A}} 
\begin{document}

\title[Unique continuation for periodic equations on large scales]{Optimal unique continuation for periodic elliptic equations on large scales}

\begin{abstract}
We prove a quantitative, large-scale doubling inequality and large-scale three-ellipsoid inequality for solutions of uniformly elliptic equations with periodic coefficients. These estimates are optimal in terms of the minimal length scale on which they are valid, and are at least ``almost'' optimal in the prefactor constants---up to, at most, an iterated logarithm of the initial doubling ratio.
\end{abstract}

\author[S. Armstrong]{Scott Armstrong}
\address[S. Armstrong]{Courant Institute of Mathematical Sciences, New York University, USA}
\email{scotta@cims.nyu.edu}
\author[T. Kuusi]{Tuomo Kuusi}
\address[T. Kuusi]{Department of Mathematics and Statistics, P.O. Box 68 (Gustaf H\"allstr\"omin katu 2), FI-00014 University of Helsinki, Finland}
\email{tuomo.kuusi@helsinki.fi}
\author[C. Smart]{Charles Smart}
\address[C. Smart]{Department of Mathematics, Yale University 10 Hillhouse New Haven, CT 06520}
\email{charles.smart@yale.edu}
\keywords{periodic homogenization, unique continuation, doubling inequality, large-scale analyticity, three-ball theorem}
\subjclass[2010]{35B10, 35B27, 35P05}
\date{July 29, 2021}

\maketitle

\section{Introduction}

A common way of quantifying the unique continuation property for solutions of elliptic PDE is by means of a \emph{doubling inequality}. A typical statement is, that for every solution~$u$ in $B_1$ and constant $M>0$, there exists $C(M) <\infty$ such that 
\begin{align}
\label{e.harmonic.doubling}
\left\| u \right\|_{L^2(B_1)} 
\leq 
M \left\| u \right\|_{L^2(B_{\sfrac12})} 
\quad \implies \quad 
\sup_{r\in (0,1)} 
\frac{\left\| u \right\|_{L^2(B_r)} }{ \left\| u \right\|_{L^2(B_{\sfrac r2})}}
\leq 
C(M).
\end{align}
Related to doubling inequalities is a \emph{three-ball inequality}, which states that a solution must satisfy
\begin{align}
\label{e.free.ballz}
\left\| u \right\|_{L^2(B_{\theta r} )} 
\leq 
C
\left\| u \right\|_{L^2(B_{\theta^2 r})}^{\alpha} 
\left\| u \right\|_{L^2(B_{\theta r} )}^{1-\alpha} 
\,, \quad \forall \theta,r \in (0,1)\,.
\end{align}
for $C<\infty$ and exponent $\alpha\leq \frac12$, ideally for $\alpha=\frac12$ and $C=1$. 
Harmonic functions satisfy these two estimates, with $C(M) = M$ in the doubling inequality and $(\alpha,C)=(\sfrac 12,1)$ in the three-ball inequality. In this case, the two are actually equivalent and can be obtained from the fact that
\begin{equation}
\label{e.threespheres}
\mbox{the mapping} 
\quad 
t \mapsto \log \left( \fint_{\partial B_{\exp(t)}} u^2 \right) 
\quad 
\mbox{is convex.} 
\end{equation}
This can be checked by a direct computation. They can also be proved from the Almgren frequency formula, which is essentially the same as~\eqref{e.threespheres}, or proved directly from the $L^2$-orthogonality of homogeneous harmonic polynomials of different degrees on spheres centered at the origin. 

\smallskip

Both of these inequalities obviously imply, on a qualitative level, the unique continuation property: a harmonic function cannot vanish in a ball without vanishing identically. They also contain quantitative information regarding the growth of harmonic functions which have important applications, including to spectral properties of the operator, the behavior of nodal sets of eigenfunctions, and inverse problems (see for instance~\cite{L,Ken,LM,ARRV}). 

\smallskip

Our focus here is on doubling and three-ball inequalities for elliptic operators with periodic coefficients. 
Such operators may behave like any variable-coefficient elliptic operators on \emph{small} length scales (below the underlying period of coefficients), but by the theory of elliptic homogenization they must behave like a constant-coefficient operator on \emph{large} scales. Therefore, one may expect to obtain analogues of~\eqref{e.harmonic.doubling} and~\eqref{e.threespheres} for solutions periodic operators, at least on large enough length scales. 
Such results have indeed been proved recently by various authors~\cite{LS,KZ,AKS,KZZ,Z}. 
However, the estimates in these works have been either qualitative (the dependence of various constants in the estimates is implicit) or else quantitative, but sub-optimal, in terms of the range of length scales on which they are valid, or else in terms of estimates for the constant~$C(M)$ in~\eqref{e.harmonic.doubling} or the parameters~$(\alpha,C)$ in~\eqref{e.free.ballz}.

\smallskip

In this paper, we prove \emph{optimal} quantitative three-ball and doubling inequalities for periodic elliptic equations on large scales. We provide a sharp estimate of the minimal length scale on which the results are valid as well as displaying estimates with prefactor constants which are either optimal, or else very close to optimal, in their dependence on the initial doubling ratio~$M$. 

\subsection*{Statement of the main result}
We consider the linear, divergence-form, uniformly elliptic equation
\begin{equation}
\label{e.pde}
-\nabla\cdot \a \nabla u = 0 \quad \mbox{in} \ U \subseteq\Rd,
\end{equation}
in dimension $d\geq 2$, where the coefficient field~$\a(\cdot)$ is a measurable, $\Zd$--periodic map from $\Rd$ into the set~$\R^{d\times d}_{\sym}$ of~$d$-by-$d$ real symmetric matrices satisfying a uniform ellipticity condition with ellipticity constant~$\Lambda$. That is, we assume that, for a given~$\Lambda \in [1,\infty)$,
\begin{equation}
\label{e.ellipticity}
I_d \leq \a(x) \leq \Lambda I_d, \quad \forall x\in\Rd\,,
\end{equation}
and
\begin{equation}
\label{e.periodicity}
\a(\cdot + z) = \a, \quad \forall z\in\Zd. 
\end{equation}
In particular, since we are interested in large scales, we make no assumption of regularity on the coefficient field~$\a(\cdot)$ beyond measurability.

\smallskip

We let~$\ahom$ be the homogenized matrix arising in the  periodic homogenization of~\eqref{e.pde}. We note that~$\ahom$ is a symmetric matrix and satisfies~$I_d \leq \ahom \leq \Lambda I_d$. It is natural to work with the ellipsoids for the operator~$\nabla \cdot \ahom \nabla$, rather than balls. These are denoted by
\begin{align}
\label{e.Er}
E_r := \left\{ x \in \Rd \,:\, x \cdot \ahom^{\,-1} x \leq r^2 \right\}
\,.
\end{align}
We use the notation $r\wedge s := \min\{ s,r\}$. 

\smallskip

We are now ready to state the main result of the paper. 

\begin{theorem}[{Large-scale three-ellipsoid and doubling inequalities}]
\label{t.threeball}
\emph{} \\
There exist constants
$C(d,\Lambda)\in [1,\infty)$ and~$\theta(d,\Lambda)\in \bigl(0,\tfrac 12\bigr]$ such that, for every $M \in [5,\infty)$, $R \in [ C \log M,\infty)$ and $u\in H^1(E_{2R})$ satisfying
\begin{align*}
-\nabla \cdot \a\nabla u = 0 \quad \mbox{in} \ E_{2R}, 
\end{align*}
and
\begin{align}
\label{e.initial.doubling}
\left\| u \right\|_{L^2(E_{R})} 
\leq 
M
\left\| u \right\|_{L^2(E_{\theta R})} 
\,,
\end{align}
we have, for every~$r \in \left[ C \log M , R\right]$,
\begin{align}
\label{e.threeball.logM}
\left\| u \right\|_{L^2(E_{\theta r})} 
\leq 
M^{C (\log \log M) \wedge C\log \frac Rr}
\left\| u \right\|_{L^2(E_{\theta^2 r})}^{\sfrac12} 
\left\| u \right\|_{L^2(E_{r})}^{\sfrac12}
\,,
\end{align}
and
\begin{align}
\label{e.doubling.logM}
\frac{\left\| u \right\|_{{L}^2(E_{r})}}{ \left\| u \right\|_{{L}^2(E_{\theta r})} }
\leq
M^{C (\log \log M) \wedge C\log \frac Rr}
\,.
\end{align}
\end{theorem}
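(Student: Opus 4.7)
The plan is to exploit the large-scale regularity theory of periodic homogenization. On scales much larger than the period, solutions of $-\nabla\cdot\a\nabla u = 0$ are well-approximated by solutions of $-\nabla\cdot\ahom\nabla u = 0$, which after a linear change of variables sending $E_r$ to $B_r$ become harmonic. Since harmonic functions satisfy the three-ball and doubling inequalities optimally (with $(\alpha,C)=(\sfrac12,1)$ in \eqref{e.free.ballz} and $C(M)=M$ in \eqref{e.harmonic.doubling}), one expects the analogues for the periodic equation, degraded only by the homogenization error on the minimal admissible length scale. The central tool is \emph{large-scale analyticity}: for each $k\in\N$, a finite-dimensional space $\mathcal{A}_k$ of solutions of the periodic equation on $\Rd$ whose behavior on large scales matches a degree-$\leq k$ harmonic polynomial for $\ahom$, up to correctors. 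Equivalently, any solution $u$ on $E_{2R}$ admits a decomposition $u=p+w$ with $p\in\mathcal{A}_k$ and $\|w\|_{L^2(E_r)}\leq C^k (r/R)^{k+1}\|u\|_{L^2(E_R)}$ for all $r\in[1,R]$.

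\textbf{Polynomial truncation and a first three-ellipsoid estimate.} First, use the doubling hypothesis~\eqref{e.initial.doubling} to bound the effective polynomial degree of $u$. Each mode in $\mathcal{A}_k\setminus\mathcal{A}_{k-1}$ of genuine degree $k$ has doubling ratio $\sim\theta^{-k}$ on scales $R\gg k$, so an expansion of $u$ in a basis adapted to the filtration $\mathcal{A}_0\subset\mathcal{A}_1\subset\cdots$ shows that its $L^2$-mass on $E_R$ is concentrated on modes of degree at most $K := C\log M$. The scale condition $R\geq C\log M$ is exactly what is needed so that the homogenization error in the polynomial approximation is smaller than $M^{-1}\|u\|_{L^2(E_R)}$ and can therefore be absorbed. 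Writing $u=p+w$ with $p\in\mathcal{A}_K$ and invoking the classical three-ball identity for $\ahom$-harmonic polynomials of degree $\leq K$, one obtains
\[
\|p\|_{L^2(E_{\theta r})}\leq \|p\|_{L^2(E_{\theta^2 r})}^{\sfrac12}\|p\|_{L^2(E_r)}^{\sfrac12}.
\]
Substituting $u=p+w$ on both sides and absorbing the error using the bound on $\|w\|$ produces a first version of \eqref{e.threeball.logM} with exponent of order $C\log(R/r)$, and iterating across dyadic scales from $R$ down to $r$ yields a corresponding version of the doubling inequality~\eqref{e.doubling.logM}.

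\textbf{Bootstrap to $\log\log M$.} The naive argument just described yields the exponent $C\log(R/r)$, which is sharp when $r$ is close to $R$ but suboptimal when $r$ approaches the minimal scale $C\log M$. To reach $C\log\log M$, I would bootstrap: apply the intermediate three-ellipsoid/doubling inequality at a scale $r'\in(r,R)$ to replace the effective doubling ratio at $r'$ by the improved value $M^{C\log(R/r')}$ rather than $M$. This improvement lowers the effective polynomial degree $K'=C\log(M^{C\log(R/r')})$ relevant at scale $r'$, which in turn sharpens the approximation error via large-scale analyticity, which then yields an improved three-ellipsoid inequality from $r'$ down to $r$. Each round of this recursion trades one logarithm for an iterated logarithm, and the recursion saturates after $O(\log\log M)$ rounds at the advertised bound $M^{C(\log\log M)\wedge C\log(R/r)}$.

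\textbf{Main obstacle.} The crux is showing that the bootstrap closes without loss of constants: each iteration must produce a genuine improvement by a definite constant factor, rather than merely shuffling logarithms into other logarithms. This hinges on the \emph{strength} of the approximation error in the large-scale analyticity statement, in particular on a bound of the form $\|w\|\leq C^K(r/R)^{K+1}\|u\|$ with only geometric (not factorial) growth in $K$. A related delicacy is that the projection of $u$ onto $\mathcal{A}_K$ depends on the scale $R$ at which one performs the truncation; matching the ``polynomial parts'' of $u$ at two different scales $R$ and $r'$ requires quantitative control over how the corrector piece of $\mathcal{A}_K$ changes with scale, and this is the technical content that distinguishes the present optimal estimate from the prior sub-optimal versions in~\cite{LS,KZ,AKS,KZZ,Z}.
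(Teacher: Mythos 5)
Your proposal identifies the right basic tools (large-scale analyticity, comparison with $\ahom$-harmonic functions), but it misses the two mechanisms that actually drive the paper's proof, and the bootstrap you describe does not close.

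\textbf{What's missing in the first step.} You claim that iterating the three-ellipsoid inequality for the harmonic part across dyadic scales yields a doubling ratio of order $M^{C\log(R/r)}$. That is precisely the sub-optimal estimate already available from~\cite{AKS,KZZ}, and it degenerates to $M^{C\log(R/\log M)}$ at the minimal scale, which is nowhere near $M^{C\log\log M}$. The paper obtains something qualitatively stronger: on scales $r\geq C(\log M)^4$, the doubling ratio stays at $CM$ \emph{with no growth in $\log(R/r)$ at all}. This hinges on the key intermediate result that every $\psi\in\A_m$ satisfies the three-ellipsoid inequality with constant $1+Cm^4/(\theta^2 r)$ for $r\geq C\theta^{-2}m^4$ (Proposition~\ref{p.threeball.Am}). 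When one iterates this down from scale $R$ to scale $C(\log M)^4$, the multiplicative errors form a geometric series and accumulate to a \emph{bounded} constant—not $\exp(C\log(R/r))$. Your argument, which only compares to the leading $\ahom$-harmonic part, does not see this and therefore cannot reach the stated result.

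\textbf{What drives the near-perfect three-ellipsoid estimate for $\A_m$.} The sharp constant $1+Cm^4/(\theta^2 r)$ in Proposition~\ref{p.threeball.Am} rests on Lemmas~\ref{l.almost.identitymap.ballz} and~\ref{l.psi.to.mathscrA}, where one shows that every $\mathscr{A}$-harmonic polynomial of degree $m$ is within $L^2(E_r)$ distance $O(m^{15/2}/r^2)$ of an $\ahom$-harmonic polynomial when $r\geq Cm^{7/2}$. The construction inverts the Laplacian on polynomials via the explicit operator $S$ of~\eqref{e.S.def}, uses its norm bound~\eqref{e.Laplacian.inversion}, the $L^2$ Markov inequality~\eqref{e.reversePoincare}, and the corrector bounds~\eqref{e.corrector.bounds}. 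This linear-algebraic/tensor work is the main technical content of Sections~\ref{s.prelims} and~\ref{s.correctors}, and your proposal contains no substitute for it.

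\textbf{The bootstrap does not close, and the $\log\log M$ comes from elsewhere.} You propose to ``replace the effective doubling ratio at $r'$ by the improved value $M^{C\log(R/r')}$ rather than $M$,'' but for $r'<R$ this quantity is at least $M$, not smaller, so the effective polynomial degree does not decrease and the claimed improvement does not materialize. Moreover the iterated logarithm in~\eqref{e.threeball.logM}--\eqref{e.doubling.logM} does not arise from $O(\log\log M)$ rounds of self-improvement. It comes from a single crude jump: after establishing doubling ratio $CM$ down to $R_0 = C(\log M)^4$, one applies large-scale analyticity once more with $m=C\log M$ to descend from $R_0$ to $r\geq C\log M$, and the growth estimate~\eqref{e.growth.psi} then yields a factor $(CR_0/r)^m \leq M^{C\log(R_0/r)} \leq M^{C\log\log M}$, using $\log(R_0/r)\leq C\log\log M$. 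This two-regime split — near-optimal estimates above $C(\log M)^4$ and a one-shot degradation below — is the structural point you would need to articulate.
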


The main point of the above theorem is that the three-ellipsoid and doubling inequalities are valid all the way down to scale~$r = C\log M$.
This requirement that the length scale be larger than this \emph{minimal scale} of~$C\log M$ is essentially a growth condition on the solution: the theorem applies to solutions which behave like a slow exponential on the largest ball. This is a natural condition, since slow exponential functions are the fastest growing solutions which can be approximated using the usual asymptotic corrector expansions in the theory of periodic homogenization, and therefore we expect that this is the best possible result which can be proved from standard homogenization techniques. 
Indeed, our estimate of $C\log M$ for the minimal scale is optimal in all dimensions~$d>2$, at least for coefficients~$\a(\cdot)$ which are merely~$C^{0,\alpha}$ for every~$\alpha<1$ (but not necessarily Lipschitz). This is due to an example constructed by Filonov~\cite{Fil} (see also~\cite{Mi,P}), which implies the existence of an elliptic equation with coefficients~$\a(\cdot)$ belonging to $\cap_{\alpha<1} C^{0,\alpha}$ which possesses a solution~$u$ in the whole space~$\Rd$ which is the product of~$\exp(\lambda x_{d})$, for some explicit $\lambda>0$, and a compactly supported function of the other $d-1$ variables.

\smallskip

The main ingredient in the proof of Theorem~\ref{t.threeball} is the large-scale analyticity estimate proved in our previous paper~\cite{AKS}.
This result is the ``best possible'' quantitative estimate in periodic homogenization, and states essentially that corrector expansions of arbitrary high order can approximate solutions of a periodic elliptic equation up to an error which is exponentially small in the aspect ratio. The statement of this result is recalled below in Proposition~\ref{p.analyticity}.
In that paper, we also proved a large-scale three-ball theorem as a corollary of large scale analyticity (see~\cite[Theorem 1.4]{AKS}) down to the optimal scale $C\log M$. However, in this estimate has sub-optimal exponent (like $\alpha< \sfrac12$ in~\eqref{e.free.ballz}) and for this reason the estimate is not particularly useful (and cannot for instance be iterated to yield a doubling inequality without  catastrophically blowing up the doubling ratio).
Recently,  Kenig, Zhu and Zhuge~\cite{KZZ} proved, by argument which was also based on~\cite{AKS}, a doubling estimate down to scale $r = M^\delta$ for arbitrary~$\delta>0$;  this is, however, still far from the optimal scale of $C\log M$, and their estimate for the doubling ratio is also much larger than the right side of~\eqref{e.doubling.logM}. As explained below, the scales of order~$\log M$ correspond to exponential growth, which is the critical regime for spectral theory problems. 

\smallskip

The goal of this work is therefore to improve the arguments passing from large-scale analyticity to quantitative unique continuation, demonstrating in particular that the former is powerful enough to imply quantitative unique continuation estimates at the optimal scale of~$C\log M$. The idea is to obtain a very sharp three-ellipsoid estimate down to scale~$C(\log M)^p$ for some finite~$p$ using precise estimates on~$\a(x)$-harmonic ``polynomials'' on these scales, and transferring such estimate to general solutions via large-scale analyticity. We then invoke large-scale analyticity a second time, in a different way, to jump between scales~$C(\log M)^p$ and~$C\log M$ (this is where the $C \log \log M$ in the exponent comes from). Here we use $p=4$, but the precise value of~$p$ is irrelevant since it disappears into the constant~$C$ next to the iterated logarithm. 

\smallskip

In particular, the constants in~\eqref{e.threeball.logM} and~\eqref{e.doubling.logM} and can be improved on scales larger than~$C (\log M)^4$. On such scales there is essentially no degradation of the doubling ratio and the prefactor in the three-ellipsoid estimate is close to~$1$. For scales closer to~$C\log M$, we are unsure whether the prefactor~$M^{C\log \log M}$ is sharp, or whether it can be reduced to $M^C$. However, this question reduces to one about the behavior of higher-order homogenized tensors and~$\mathscr{A}$-harmonic polynomials which seems difficult to resolve. See Remark~\ref{r.logM4} for more details. 

\smallskip

If the coefficient field is assumed to be Lipschitz, then one may use the classical frequency function estimate of Garofalo and Lin~\cite{GL} to pass from the minimal doubling scale of~$C\log M$ all the way down to infinitesimal scales. 
Of course, if $M$ is large, then starting from scale $C\log M$ with Lipschitz constant $L$ is the same, by scaling, as starting from scale one with Lipschitz constant~$CL \log M$.
Since the unique continuation estimates in~\cite{GL} degenerate rather badly as the Lipschitz constant becomes large, this leads to an estimate with a doubling ratio blowing up quite quickly in~$M$. 

\begin{corollary}[Every-scale doubling inequality]
\label{c.doubling.dumbdumb}
Suppose, in addition to the hypotheses of Theorem~\ref{t.threeball}, that the coefficients~$\a(\cdot)$ satisfy, for a given $L\in(0,\infty)$, the Lipschitz condition
\begin{align*}
\sup_{x,y\in \Rd,\, x\neq y}
\frac{ | \a(x) - \a(y)|}{|x-y|}
\leq 
L.
\end{align*}
Then there exists $C(d,\Lambda)<\infty$ such that we have the estimate
\begin{align}
\label{e.doubling.Lipschitz}
\sup_{ r \in (0,R]} 
\frac{\left\| u \right\|_{\underline{L}^2(E_{r})}}{\left\| u \right\|_{\underline{L}^2(E_{\theta r})}}
\leq 
\exp\left( CM^{CL \log\log M}\log M \right)
\,.
\end{align}
\end{corollary}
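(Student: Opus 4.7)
The plan is to patch together two regimes. On scales $r \in [C\log M, R]$, Theorem~\ref{t.threeball} applies directly and gives a doubling ratio bounded by $M^{C\log\log M}$, which is already majorized by the right-hand side of~\eqref{e.doubling.Lipschitz}, so nothing further is required on these scales. On the remaining scales $r \in (0, C\log M)$, where Theorem~\ref{t.threeball} is silent, the only available tool is the classical unique continuation for uniformly elliptic equations with Lipschitz coefficients, which must be invoked after a suitable rescaling.

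Concretely, I would set $r_0 := C\log M$ and define $v(y) := u(r_0 y)$. Then $v$ solves $-\nabla\cdot \tilde\a \nabla v = 0$ with rescaled coefficient $\tilde\a(y) := \a(r_0 y)$, and $\tilde\a$ is Lipschitz with constant $\tilde L := r_0 L = CL\log M$. Applying Theorem~\ref{t.threeball} at scale $r = r_0$ and transporting through the rescaling, the doubling ratio of $v$ at the unit scale is bounded by $\tilde M := M^{C\log\log M}$.

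I would then invoke the frequency function theorem of Garofalo and Lin~\cite{GL}, or more precisely its extension by Kukavica to $W^{1,\infty}$ coefficients: for a divergence-form elliptic equation on the unit ball with Lipschitz constant $\tilde L$ and initial doubling ratio $\tilde M$, the Almgren frequency satisfies $N(r) \lesssim e^{C\tilde L} N(1)$ for $r \in (0,1]$, which translates into a doubling bound on every sub-unit scale of the form
\[
\tilde M^{\exp(C\tilde L)} = M^{C\log\log M \cdot M^{CL}}\,.
\]
For $M$ sufficiently large one has $M^{CL}\cdot \log\log M \leq M^{CL\log\log M}$ (and the remaining bounded range of $M$ is absorbed into the constants), so this quantity is majorized by $\exp\bigl(CM^{CL\log\log M}\log M\bigr)$. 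Undoing the rescaling $y = x/r_0$ gives the desired doubling control at every $r \in (0, r_0]$, and combined with the first step this yields~\eqref{e.doubling.Lipschitz}.

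The principal technical obstacle — and the source of the rather crude final bound — is the well-known exponential dependence on the Lipschitz constant in the Garofalo-Lin frequency estimate. Because the rescaling to unit scale already inflates the original Lipschitz constant from $L$ to $CL\log M$, that single exponential becomes a \emph{double} exponential in $L\log M$, which is exactly what forces the form of the right-hand side of~\eqref{e.doubling.Lipschitz}. Improving this bound would require a sharpened Lipschitz-constant dependence in the classical frequency-function argument, which is not presently available, so this is the main obstruction one would face in trying to do better.
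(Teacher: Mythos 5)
Your proposal is correct and follows essentially the same approach that the paper sketches (but omits): apply Theorem~\ref{t.threeball} on scales $r \geq C\log M$, rescale by $r_0 = C\log M$ so the Lipschitz constant inflates to $CL\log M$, and then invoke the Garofalo--Lin frequency function (with Lipschitz dependence tracked explicitly, as in~\cite{LM}) to cover the sub-unit scales. The bookkeeping $\tilde M^{\exp(C\tilde L)} = M^{C\log\log M\cdot M^{CL}}$ and the absorption into $\exp(CM^{CL\log\log M}\log M)$ are exactly the computation the authors have in mind.
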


The estimate~\eqref{e.doubling.Lipschitz} is certainly not optimal and it actually contains much less information that the \emph{large-scale} doubling estimate~\eqref{e.doubling.logM}. 
We can see from Theorem~\ref{t.threeball} that there is very little degradation in the doubling ratio as we move down the scales, from the largest scale~$R$ to minimal scale~$r = C\log M$. 
Everything bad which can happen to the doubling ratio must occur on scales smaller than $C\log M$. Corollary~\ref{c.doubling.dumbdumb} fails to see this crucial fact, because it conflates \emph{large-scale estimates}, which are obtained from the periodicity of the coefficients, and \emph{small-scale estimates}, which are obtained from their regularity. 
It is important to keep these separate. 

\smallskip

Quantitative unique continuation results for periodic operators are of particular interest in the regime in which the function exhibits exponential growth, that is, on scales~$r$ for which~$r=O(\log M)$. 
One reason is because of possible applications to open problems in the spectral theory for such operators. For instance, it is still not known whether or not a general periodic, uniformly  elliptic  operator  may possess eigenvalues with~$L^2(\Rd)$ eigenfunctions (see for instance~\cite[Section 6]{Ku}). 
If $u\in H^1_{\mathrm{loc}}(\Rd)$ is a solution of
\begin{align*}
-\nabla \! \cdot \a\nabla u = \lambda u \quad \mbox{in} \ \Rd,
\end{align*}
then we may add the dummy variable $x_{d+1}$ by defining 
$$v(x,x_{d+1}) := \exp( \lambda^{\sfrac12} x_{d+1}) u(x)$$ 
to obtain a solution of a periodic equation in dimension~$d+1$ which grows like an exponential in the dummy variable direction. Obviously, the larger the eigenvalue~$\lambda$, the faster the exponential growth is. Therefore it would be of great interest to extend the estimate~\eqref{e.doubling.logM} to scales $r = \delta \log M$ for a tiny~$\delta>0$, because this would imply the nonexistence of eigenvalues with eigenfunctions in~$L^2(\Rd)$ high up in the spectrum (depending on~$\delta$). 
On the other hand, if $\lambda$ is sufficiently small, then we can apply the doubling inequality to get at least a \emph{one} smaller ellipsoid satisfying doubling, and this turns out to be enough to imply the following statement (which was already proved in~\cite[Theorem 1.5]{AKS} as a consequence of large-scale analyticity). 

\begin{corollary}
\label{c.embed}
There exist~$\lambda_0(d,\Lambda)>0$ such that, if 
\begin{equation*}
(\lambda,u) \in [0,\lambda_0]\times ( H^1_{\mathrm{loc}}(\Rd) \cap L^2(\Rd))
\end{equation*}
satisfies the eigenvalue equation
\begin{equation}
-\nabla \! \cdot \a\nabla u = \lambda u \quad \mbox{in} \ \Rd\,,
\end{equation}
then $u\equiv 0$ in $\Rd$. 
\end{corollary}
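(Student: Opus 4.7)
My plan is to reduce the eigenvalue equation to a homogeneous divergence-form equation in one higher spatial dimension and apply Theorem \ref{t.threeball} to the lifted solution, deriving a contradiction from $u \not\equiv 0$. The case $\lambda = 0$ is immediate: a Caccioppoli estimate on arbitrarily large balls gives $\int_{B_r} |\nabla u|^2 \leq C r^{-2} \|u\|_{L^2(\R^d)}^2 \to 0$ as $r \to \infty$, so $u$ is constant and hence zero by $L^2$-integrability. So I assume $0 < \lambda \leq \lambda_0$, set $\mu := \sqrt\lambda$, and (translating if necessary) arrange $c_0 := \|u\|_{L^2(B_1(0))} > 0$. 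Defining
\[
v(x, x_{d+1}) := e^{\mu x_{d+1}} u(x) \quad \text{in } \R^{d+1}
\]
and using the eigenvalue equation together with $\partial_{x_{d+1}}^2 v = \mu^2 v = \lambda v$, one finds $-\nabla \cdot \tilde{\a} \nabla v = 0$, where $\tilde{\a}(x, x_{d+1}) := \mathrm{diag}(\a(x), 1)$ is $\Z^{d+1}$-periodic and satisfies \eqref{e.ellipticity} with the same constant $\Lambda$. Theorem \ref{t.threeball} therefore applies to $v$ in one higher dimension; I write $E_R$ for the associated ellipsoids, which are of the form $\{x \cdot \ahom^{\,-1} x + x_{d+1}^2 \leq R^2\}$.

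To verify the doubling hypothesis \eqref{e.initial.doubling} at a scale $R$, I use Fubini in the $x_{d+1}$-slice. The integrability $u \in L^2(\R^d)$ gives
\[
\|v\|_{L^2(E_R)}^2 \leq \|u\|_{L^2(\R^d)}^2 \int_{-CR}^{CR} e^{2\mu s}\,ds \leq \frac{C\, e^{C \mu R}}{\mu}\, \|u\|_{L^2(\R^d)}^2,
\]
while, for $R$ large enough that $B_1(0) \times [-1, 1] \subset E_{\theta R}$,
\[
\|v\|_{L^2(E_{\theta R})}^2 \geq c_0^2 \int_{-1}^{1} e^{2\mu s}\,ds \geq 2 c_0^2.
\]
The resulting doubling ratio $M$ thus satisfies $\log M \leq C(\mu R + 1)$, and the constraint $R \geq C \log M$ becomes $R \geq C^2 \mu R + C^2$, holding for every $R \geq R_0$ provided $\mu \leq 1/(2C^2)$. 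This is the precise choice of $\lambda_0 := 1/(4C^4)$, which depends only on $(d,\Lambda)$ through the constant in Theorem \ref{t.threeball}.

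The main obstacle is then converting the doubling and three-ellipsoid estimates supplied by Theorem \ref{t.threeball} — valid for $v$ at every scale $r \in [C \log M, R]$ with doubling exponent at most $M^{C \log \log M \wedge C \log(R/r)}$ — into the conclusion $u \equiv 0$. The plan is to iterate the three-ellipsoid inequality \eqref{e.threeball.logM}, which amounts to an approximate log-convexity of $r \mapsto \log \|v\|_{L^2(E_r)}$ modulo a controlled additive error, and then to compare the resulting bound with the explicit formula
\[
\|v\|_{L^2(E_r)}^2 = \int_{\R^d} u^2(x)\, \frac{\sinh\!\bigl(2\mu h_r(x)\bigr)}{\mu}\, dx,
\]
where $h_r(x)$ denotes the $x_{d+1}$-half-thickness of $E_r$ over $x \in \R^d$. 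Letting $R \to \infty$ with $\mu \in (0, \sqrt{\lambda_0}]$ fixed, the log-convexity prediction and the explicit exponential-times-$L^2$ structure of $v$ become incompatible unless $c_0 = 0$, completing the contradiction. As remarked after the corollary, the ``one smaller ellipsoid'' output of Theorem \ref{t.threeball} should be the only feature of the doubling inequality actually needed here; a closely related conclusion obtained via large-scale analyticity rather than doubling appears as \cite[Theorem 1.5]{AKS}.
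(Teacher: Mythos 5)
Your setup is right: the lift $v(x,x_{d+1}) = e^{\mu x_{d+1}}u(x)$ with $\mu = \sqrt\lambda$ and $\tilde\a = \mathrm{diag}(\a,1)$ is the intended $\Z^{d+1}$-periodic extension, the ellipticity and periodicity transfer, and the Fubini computation showing $\log M \lesssim \mu R$ (with a $\mu$- and $u$-dependent additive constant that you omit, but which is harmless once $R$ is large) correctly identifies how small $\lambda_0$ must be so that $R \geq C\log M$ holds for all large $R$. This matches the route the paper hints at and, according to the authors, the route taken in \cite[Theorem 1.5]{AKS}. The $\lambda = 0$ case via Caccioppoli is fine as well.

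The gap is in the final step, which is not actually carried out, and I do not believe it works as sketched. You propose to iterate the three-ellipsoid inequality \eqref{e.threeball.logM} into an approximate log-convexity statement for $r\mapsto \log\|v\|_{L^2(E_r)}$, then compare against the explicit formula and "let $R\to\infty$." But for fixed $u\in L^2$ and $\mu>0$, the dominated-convergence computation you wrote down shows that for $r$ large, $\|v\|_{L^2(E_r)}^2 \approx (2\mu)^{-1}e^{2\mu r}\|u\|_{L^2(\R^d)}^2$, i.e.\ $\log\|v\|_{L^2(E_r)} \approx \mu r + \mathrm{const}$. In the variable $t = \log r$ this is $\mu e^t + \mathrm{const}$, which is \emph{convex}. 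So the exponential-times-$L^2$ structure of $v$ is entirely \emph{consistent} with log-convexity, not incompatible with it, and plugging the asymptotics into \eqref{e.threeball.logM} at any triple $(r, \theta r, \theta^2 r)$ with $r \in [C\log M, R]$ yields the vacuous inequality $-\mu r (1-\theta)^2 \leq C\log M$, which holds with plenty of room. The same happens with the doubling inequality \eqref{e.doubling.logM}: one finds the requirement $\theta \leq C\log(1/\theta)$ at scale $\theta R$, which is automatic for $\theta\leq\tfrac12$, $C\geq 1$. No contradiction arises from these checks, so "iterate the three-ellipsoid inequality and let $R\to\infty$" does not, as stated, close the argument. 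You would need to identify what feature of $u\in L^2(\R^d)$ is actually being violated --- the paper's phrase about "one smaller ellipsoid satisfying doubling" being "enough" suggests a single, more structural use of the conclusion rather than an iteration, and the cited proof in \cite{AKS} is based on large-scale analyticity rather than the doubling inequality per se. As it stands, your proposal stops at the point where the real work begins.
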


Perhaps the main open question regarding quantitative unique continuation for periodic elliptic operators left unresolved by this work is the following: assuming~$\a(\cdot)$ is at least Lipschitz, how does the doubling ratio behave on scales between~$o(\log M)$ and~$C\log M$?

\subsection*{Comparison to previous works}

The first quantitative unique continuation results in the context of periodic homogenization 
was proved by Lin and Shen~\cite{LS}, who obtained a doubling inequality like Corollary~\ref{c.doubling.dumbdumb} but with implicit dependence of the right side of~\eqref{e.doubling.Lipschitz} on the initial doubling ratio~$M$. 
Kenig and Zhu~\cite{KZ} subsequently proved a version of the three-ball inequality~\eqref{e.threeball.logM} on scales $r \geq M^C$ with sub-optimal exponents. In our previous paper~\cite{AKS}, we improved this result by using large-scale analyticity to obtain a a three-ball inequality is valid down to scale $C\log M$ with almost optimal exponents. 
Later, Kenig, Zhu and Zhuge~\cite[Theorem 1.1]{KZZ} proved, by argument also based on large-scale analyticity, a similar estimate to~\eqref{e.doubling.Lipschitz} with a right side which is doubly exponential in~$M$, namely $\exp( \exp(M^\delta))$ for small $\delta>0$. Their arguments essentially yield a version of the doubling inequality~\eqref{e.doubling.logM} down to scale $r = M^\delta$ for an arbitrarily small exponent~$\delta>0$. 

\smallskip

The paper~\cite{{KZZ}} also contains estimates on nodal sets for eigenfunctions with this double exponential dependence,~$\exp( \exp(M^\delta))$. Using our Corollary~\ref{c.doubling.dumbdumb}, this estimate can be improved to~$\exp( M^{C \log \log M})$.

\subsection*{Outline of the paper}

In Section~\ref{s.prelims} we set up the notation and recall some basic facts concerning the action of the Laplacian operator on the space of real polynomials. 
In Section~\ref{s.correctors}, we recall the definitions of the higher-order correctors and homogenized tensors in periodic homogenization and record some estimates on the difference between $\a(x)$-harmonic polynomials and~$\ahom$-harmonic polynomials. This leads to a three-ellipsoid estimate for $\a(x)$-harmonic polynomials in Proposition~\ref{p.threeball.Am}, which in  Section~\ref{s.mainproofs} is combined with large-scale analyticity to obtain Theorem~\ref{t.threeball}. 

\smallskip

We do not include the proofs of Corollaries~\ref{c.doubling.dumbdumb} and~\ref{c.embed} here. The proof of the former is obtained by well-known arguments: one combines~\eqref{e.doubling.logM} at scale~$C\log M$ with the aid of the Lipschitz assumption  and the result of Garofalo and Lin~\cite{GL} to handle the small scales. Since the explicit dependence of the doubling ratio on the Lipschitz constant is required, it is necessary to inspect the argument of~\cite{GL} and track this dependence line-by-line. This is routine and has essentially already been done\footnote{Apply~\cite[(8)]{LM} after rescaling.} in~\cite{LM}, so we omit the details.
Since Corollary~\ref{c.embed} was proved already in~\cite{AKS} by a simple argument which is similar to the one implied here, it has also been left to the reader.

\section{Preliminaries}
\label{s.prelims}

\subsection{Notation}
Throughout,~$d$ denotes the dimension of Euclidean space~$\Rd$ and~$\Lambda\in [1,\infty)$ is the ellipticity of the coefficient field~$\a(\cdot)$, which is assumed to satisfy~\eqref{e.ellipticity} and~\eqref{e.periodicity}. 
We let~$C$ and~$c$ denote positive constants which depend only on~$(d,\Lambda)$ and may vary in each occurrence. 

The set of natural numbers is~$\N$ and~$\N_0=\N \cup\{0\}$.
The standard basis for~$\Rd$ is~$\{e_1,\ldots,e_d\}$.
We  denote volume-normalized~$L^p$ norms with underlines:
if~$U\subseteq \Rd$ with~$|U| <\infty$, then 
\begin{align*}
\left\| u \right\|_{\underline{L}^p(U)} 
:= \biggl( 
\fint_U |u(x)|^p \,dx \biggr)^{\!\!\sfrac1p}
=
\biggl( 
\frac1{|U|} \int_U |u(x)|^p \,dx \biggr)^{\!\!\sfrac1p}
\,.
\end{align*}
We employ the usual multi-index notation and reserve the symbols~$\alpha,\beta\in \N_0^d$ to denote multi-indices and denote 
\begin{align*}
|\alpha| := \alpha_1+\ldots +\alpha_d
\qquad \mbox{and} \qquad 
\alpha!=\alpha_1!\cdots\alpha_d!
\,.
\end{align*}
Define, for~$m\in\N$ and~$|\alpha| = m$, the multinomial coefficient
\begin{align*}
\binom{m}{\alpha}
:=
\frac{m!}{\alpha!} \,.
\end{align*}
A \emph{symmetric tensor of order $m$} is a mapping
\begin{align*}
T: \{ \alpha \in \N_0^d \,:\, |\alpha|= m\} \to \R
.
\end{align*}
We write is in coordinate notation as $(T_\alpha)_{|\alpha|=m}$. We let $\T_m$ denote the set of symmetric tensors of order~$m$. We identify $\T_0$ with $\R$. We define an inner product on~$\T_m$ by
\begin{align*}
(S:T) 
:= 
\sum_{|\alpha|=m}
\binom{m}{\alpha} S_\alpha T_\alpha.
\end{align*}
We also denote $|T| := (T:T)^{\frac12}$. 
In this paper, we do not distinguish between tensors and their symmetric parts. Therefore the reader may assume that all tensors are symmetric, and that all tensor products are symmetrized. 

\smallskip

We let~$\P$ denote the linear space of real polynomials in~$\Rd$, the subspace of polynomials of degree at most $m\in\N$ is denoted by~$\P_m^*$, and $P_m^* \subseteq \P_m$ is the subspace of~$m$-homogeneous polynomials. For each $x\in\Rd$, we let $x^{\otimes m}$ denote the tensor with entries $x^\alpha$, $|\alpha|=m$. We can write any polynomial $p\in\mathbb{P}_m$ as 
\begin{align*}
p = \sum_{k=0}^m \frac1{k!} \nabla^k p(0):x^{\otimes k}. 
\end{align*}
The set of harmonic polynomials is denoted by
\begin{align*}
\H:= \left\{ p\in\P\,:\, \Delta p = 0 \right\}
\end{align*}
and we also set~$\H_m:= \H \cap \P_m$ and~$\H_m^* := \H \cap \P_m^*$.

\subsection{The Laplacian on the space of polynomials}
In this subsection we recall some basic facts regarding the action of the Laplacian~$\Delta$ on the space~$\P$ of polynomials. We define an inner product~$\langle\cdot,\cdot\rangle_{\P}$ on~$\P$ by 
\begin{align*}
\langle p, q \rangle_{\P}
=
\sum_{k=0}^\infty 
\frac1{k!} 
\nabla^kp(0) \!:\! \nabla^kq(0)
\,.
\end{align*}
Equivalently, for every pair of multi-indices $\alpha$ and $\beta$, 
\begin{align}
\label{e.monomials.orthogonal}
\bigl\langle x^\alpha, x^\beta \bigr\rangle_{\P}
=
\alpha! \indc_{\{\alpha=\beta\}}
\,.
\end{align}
The operator which performs multiplication by~$|x|^2$ and the Laplacian operator restriced to~$\P$ are adjoints with respect to this inner product:
\begin{align}
\label{e.Laplacian.mult.adjoint}
\bigl\langle |x|^2 p , q \bigr\rangle_{\P} = \bigl\langle p , \Delta q \bigr\rangle_{\P}
\,, \qquad \forall p,q\in\P
\,.
\end{align}
To see this, we compute, for any pair of multi-indices~$\alpha$ and~$\beta$, 
\begin{align*}
\bigl\langle |x|^2 x^\alpha, x^\beta \bigr\rangle_{\P}
&
=
\sum_{i=1}^d 
\bigl\langle x^{\alpha+2e_i}, x^\beta \bigr\rangle_{\P}
\\ & 
=
\sum_{i=1}^d
\beta!
\indc_{\{ \beta = \alpha+2e_i \}} 
\\ & 
= 
\sum_{i=1}^d
(\beta_i)(\beta_i-1) 
\alpha !
\indc_{\{ \beta = \alpha+2e_i \}} 
\\ & 
=
\sum_{i=1}^d
\bigl\langle x^{\alpha}, (\beta_i)(\beta_i-1) x^{\beta-2e_i} \bigr\rangle_{\P}
=
\bigl\langle  x^\alpha, \Delta( x^\beta)  \bigr\rangle_{\P}
\,.
\end{align*}
It follows from finite-dimensional linear algebra that $\P^*_m = \H_m^* \oplus (|x|^2 \P_{m-2}^*)$. In fact, by induction we obtain an orthogonal decomposition of $\P_m^*$:
\begin{align}
\label{e.Pstarm.decomp}
\P_m^* = 
\H_m^* \oplus |x|^2 \H_{m-2}^*
\oplus \cdots \oplus 
|x|^{2\lfloor \frac m2 \rfloor }\H_{m-2 \lfloor \frac m2 \rfloor}^*
\,.
\end{align}
By a direct computation, we have that
\begin{align}
\label{e.laplacian.mult.commutator}
\Delta (|x|^{k+2}p) 
=
(k+2)(d+2m+k) |x|^k p 
\,,
\quad \forall p \in\H_m^*\,.
\end{align}
In view of~\eqref{e.Pstarm.decomp}, 
we define an operator $S: \P_m^* \to \P_{m+2}^*$ by 
\begin{align}
\label{e.S.def}
S\Biggl( \sum_{k=0}^{\lfloor \frac m2\rfloor} 
|x|^{2k} p_k \Biggr)
:= 
\sum_{k=0}^{\lfloor \frac m2 \rfloor} 
b_k^{-1} 
|x|^{2k+2} p_k, 
\quad p_k \in \H_{m-2k}^*\,,
\end{align}
where the coefficients $b_k$ are given by
\begin{align}
\label{e.bk}
b_k := (2k+2)(d+2m-2k)
\,.
\end{align}
We extend~$S$ to~$\P$ by linearity. 
By a direct computation, using~\eqref{e.laplacian.mult.commutator}, we have  
\begin{align*}
\Delta Sp = p, \quad \forall p\in\P. 
\end{align*}
It is immediate from the definition of~$S$ and~\eqref{e.Laplacian.mult.adjoint} that~$\range(S) \perp \H$.
In other words,~$q=Sp$ is the unique polynomial solution of~$\Delta q = p$ such that~$q$ is orthogonal to all harmonic polynomials with respect to~$\langle \cdot, \cdot \rangle_{\P}$. 
To estimate the norm of~$S$, we fix~$p\in\P_{m}^*$ and write it via~\eqref{e.Pstarm.decomp} as
\begin{align*}
p 
=
\sum_{k=0}^{\lfloor \frac m2\rfloor} 
|x|^{2k} p_k,
\qquad p_k\in\H_{m-2k}^*, 
\end{align*}
and compute, using~\eqref{e.Laplacian.mult.adjoint} and~\eqref{e.Pstarm.decomp}, 
\begin{align*}
\langle Sp,Sp \rangle 
&
=
\sum_{k=0}^{\lfloor \frac m2\rfloor} 
b_{k}^{-2} 
\bigl \langle 
|x|^{2k+2} p_k ,
|x|^{2k+2} p_k
\bigl \rangle_\P 
\\ & 
=
\sum_{k=0}^{\lfloor \frac m2\rfloor} 
b_{k}^{-2} 
\bigl \langle 
\Delta \big( |x|^{2k+2} p_k\big) ,
|x|^{2k} p_k
\bigl \rangle_\P 
\\ & 
=
\sum_{k=0}^{\lfloor \frac m2\rfloor} 
b_{k}^{-2} 
\bigl \langle 
b_k |x|^{2k} p_k ,
|x|^{2k} p_k
\bigl \rangle_\P 
=
\sum_{k=0}^{\lfloor \frac m2\rfloor} 
b_{k}^{-1} 
\bigl \langle 
|x|^{2k} p_k ,
|x|^{2k} p_k
\bigl \rangle_\P 
\,.
\end{align*}
In particular, 
\begin{align}
\label{e.inversionbound}
\left\| Sp \right\|_{\P} 
\leq 
(4m+2d)^{-\frac12} 
\left\| p \right\|_{\P}
\,, \quad \forall p\in\P_m^*\,.
\end{align}
We will make use of the~$L^2$ Markov inequality for real multivariate polynomials, which states that 
\begin{align}
\label{e.reversePoincare}
\left\| \nabla p \right\|_{L^2(B_r)}
\leq 
\frac{m^2}{r}
\left\| p \right\|_{L^2(B_r)}
\,, \qquad 
\forall p\in\P_m
\,.
\end{align}
A proof of~\eqref{e.reversePoincare} can be found in~\cite{Ditz}.

\begin{lemma}
\label{l.inversion.Psi}
For every~$m\in\N_0$,~$p\in\P_m^*$ and~$r>0$
\begin{align}
\label{e.Laplacian.inversion}
\left\| S p \right\|_{L^2( B_r )}
\leq 
\frac{r^2}{m+1} 
\left\| p \right\|_{L^2( B_r)} 
\,.
\end{align}
\end{lemma}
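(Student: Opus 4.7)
The plan is to decompose $p \in \P_m^*$ via the orthogonal decomposition~\eqref{e.Pstarm.decomp} as
\[
p = \sum_{k=0}^{\lfloor m/2 \rfloor} |x|^{2k} p_k, \quad p_k \in \H^*_{m-2k},
\]
so that the explicit formula~\eqref{e.S.def} gives $Sp = \sum_k b_k^{-1} |x|^{2(k+1)} p_k$. I would then compute both $L^2(B_r)$ norms directly by passing to polar coordinates and exploiting the orthogonality of spherical harmonics of different degrees on~$\partial B_1$.

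The crucial point is that, although~\eqref{e.Pstarm.decomp} is \emph{a priori} orthogonal only with respect to~$\langle\cdot,\cdot\rangle_\P$, the summands $|x|^{2k}p_k$ are in fact also pairwise orthogonal in $L^2(B_r)$. Indeed, on each sphere~$\partial B_s$, the restriction of $|x|^{2k}p_k$ equals $s^{m}$ times the restriction of $p_k$ to $\partial B_1$ (using the $(m-2k)$-homogeneity of $p_k$), and the restrictions of $p_j$ and $p_k$ to $\partial B_1$ are $L^2(\partial B_1)$-orthogonal for $j \neq k$, since they are spherical harmonics of distinct degrees $m-2j \neq m-2k$. Integrating out the radial variable should then produce
\[
\|p\|_{L^2(B_r)}^2 = \frac{r^{d+2m}}{d+2m} \sum_{k} \|p_k\|_{L^2(\partial B_1)}^2,
\]
and, viewing $Sp = \sum_{k'\geq 1} |x|^{2k'}q_{k'}$ with $q_{k'} := b_{k'-1}^{-1} p_{k'-1} \in \H^*_{(m+2)-2k'}$ and applying the same identity at order $m+2$,
\[
\|Sp\|_{L^2(B_r)}^2 = \frac{r^{d+2m+4}}{d+2m+4} \sum_{k} b_k^{-2} \|p_k\|_{L^2(\partial B_1)}^2.
\]

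Forming the ratio, the proof will reduce to the pointwise lower bound $b_k \geq m+1$ for every $0 \leq k \leq \lfloor m/2 \rfloor$, which combined with the harmless factor $(d+2m)/(d+2m+4) \leq 1$ delivers exactly~\eqref{e.Laplacian.inversion}. Since $b_k=(2k+2)(d+2m-2k)$ is a concave quadratic in~$k$, its minimum over $[0,\lfloor m/2\rfloor]$ is attained at an endpoint; at $k=0$ one has $b_0=2(d+2m)\geq 4(m+1)$, and at $k=\lfloor m/2\rfloor$ a short case check on the parity of~$m$ (using $d\geq 2$) gives $b_{\lfloor m/2\rfloor}\geq 2(m+1)$ in either case. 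The main---still mild---subtlety in the argument is verifying the $L^2(B_r)$-orthogonality of the summands in~\eqref{e.Pstarm.decomp}, since only the $\langle\cdot,\cdot\rangle_\P$-orthogonality has been set up explicitly in the preliminaries; once this spherical-harmonics orthogonality is in hand, everything else is a direct polar-coordinate computation.
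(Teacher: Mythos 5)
Your proposal is correct and takes essentially the same route as the paper: both decompose $p$ via~\eqref{e.Pstarm.decomp}, both rely on the $L^2(B_r)$-orthogonality of that decomposition, and both reduce the estimate to a lower bound on the concave quadratic $b_k$ by checking its endpoints. The only (minor) difference is that you compute the two $L^2(B_r)$ norms exactly by polar integration, extracting the extra factor $(d+2m)/(d+2m+4)\leq 1$, whereas the paper simply uses $\||x|^{2k+2}p_k\|_{L^2(B_r)}\leq r^2\||x|^{2k}p_k\|_{L^2(B_r)}$ and the bound $b_k\geq 4m+2d$; your explicit spherical-harmonics justification of the $L^2(B_r)$-orthogonality is a detail the paper asserts without comment, and is a sound addition.
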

\begin{proof}
Let $p\in\P_m^*$ and, using~\eqref{e.Pstarm.decomp}, write
\begin{align*}
p = 
\sum_{k=0}^{\lfloor \frac m2\rfloor} 
|x|^{2k} p_k,
\quad p_k \in \H_{m-2k}^*\,,
\end{align*}
and let~$q :=  Sp$. 
We will use the fact that the orthogonal decomposition~\eqref{e.Pstarm.decomp} is valid with respect to the inner product~$L^2(B_r)$, for any $r>0$. We compute 
\begin{align*}
\left\| q \right\|_{L^2(B_r)}^2
&
=
\sum_{k=0}^{\lfloor \frac m2 \rfloor} 
b_k^{-2} 
\left\| |x|^{2k+2} p_k \right\|_{L^2(B_r)}^2
\\ & 
\leq
\sum_{k=0}^{\lfloor \frac m2 \rfloor} 
b_k^{-2} 
r^4
\left\| |x|^{2k} p_k \right\|_{L^2(B_r)}^2
\\ & 
= 
\sum_{k=0}^{\lfloor \frac m2 \rfloor} 
\biggl( 
\frac{ r^4 }{(2k+2)(d+2m-2k) }
\biggr)^{\!\!2}
\left\| |x|^{2k} p_k \right\|_{L^2(B_r)}^2
\\ & 
\leq
\frac{r^4}{(4m+2d)^2}
\sum_{k=0}^{\lfloor \frac m2 \rfloor} 
\left\| |x|^{2k} p_k \right\|_{L^2(B_r)}^2
=
\frac{r^4}{(4m+2d)^2}
\left\| p\right\|_{L^2(B_r)}^2
\,.
\end{align*}
The proof is complete.
\end{proof}

\section{\texorpdfstring{$\mathscr{A}$}{script-A}-harmonic and \texorpdfstring{$\a(x)$}{a(x)}-harmonic polynomials}
\label{s.correctors}

\subsection{Higher-order correctors and homogenized tensors}

We introduce the higher-order correctors~$\phi_m$ and homogenized tensors~$\ahom_m$ arising in the theory of periodic homogenization. The corrector~$\phi_m$ of order $m\in\N$ is a~$\T_m$-valued,~$\Zd$-periodic function and the homogenized tensor~$\ahom_m$ belongs to~$\T_m$. These are completely characterized by the following:

\begin{itemize}
\setlength{\itemsep}{0.2em}
\item The zeroth order homogenized tensor vanishes: $\ahom_0= 0$.

\item The zeroth order corrector is equal to one: $\phi_0=1$. For convenience we also define $ \phi_{-1}:=0$.

\item For every $m\geq 1$, the homogenized tensor of order $m$ is defined in terms of the correctors of orders $m-1$ and $m-2$ by
\begin{align}
\label{e.ahomm}
\ahom_m:= \big\langle 
\a\nabla \phi_{m-1}
+
\a \phi_{m-2} 
\big\rangle.
\end{align}
Here $\langle\cdot \rangle$ denotes the mean of a periodic function.

\item For each $m\geq1$, the function $\phi_m$ is defined in terms of the 
correctors and homogenized tensors of orders~$m-1$ and~$m-2$
as the unique $\Zd$-periodic solution of the equation
\begin{align}
\label{e.phim}
\left\{
\begin{aligned}
&
-\nabla \! \cdot \a\nabla  \phi_{m} 
=
\nabla \cdot (\a   \phi_{m-1})
+
\a\nabla  \phi_{m-1}
+
\a   \phi_{m-2} 
-
\ahom_m
\,, 
\\ & 
\langle \phi_m \rangle = 0
\,.
\end{aligned}
\right.
\end{align}
\end{itemize}

To keep our computations short, we use the convention that, in expressions such as~\eqref{e.ahomm} and~\eqref{e.phim}, the symbol~$\nabla$ always contracts against the matrix~$\a$, while other implied products involving tensors denote the symmetrized tensor product. In particular, all tensors should be assumed to be symmetric even if this is not obvious from some of the expressions as written.  

Note that the equation for $\phi_m$ does posses a solution, which is unique up to additive constants, since the right-hand side is of zero mean by the definition of~$\ahom_m$ in~\eqref{e.ahomm}. 

The above recursively defines the higher-order correctors and homogenized tensors. 
It is furthermore easy to obtain, by induction, the existence of a constant~$C(d)<\infty$ such that, for every $m\in\N$, 
\begin{align}
\label{e.warmup.bounds}
\| \nabla \phi_m \|_{L^2(Q_1)} 
+
| \ahom_m | 
\leq 
(C\Lambda)^m. 
\end{align}
The De Giorgi-Nash $L^\infty$ estimate then yields
\begin{align*}
\| \phi_m \|_{L^\infty(\Rd)} 
\leq 
(C\Lambda)^{m+\frac d2}. 
\end{align*}
Allowing the constant to depend on~$\Lambda$, we obtain~$C(d,\Lambda)<\infty$ such that
\begin{align}
\label{e.corrector.bounds}
\| \phi_m \|_{L^\infty(\Rd)} 
+
| \ahom_m | 
\leq 
C^m\,,
\quad \forall m\in\N\,.
\end{align}
For more on the higher-order correctors and homogenized tensors, including an alternative derivation, see~\cite{AKS}.

\smallskip

The second-order homogenized tensor~$\ahom_2$ coincides with the usual homogenized matrix in the theory of periodic homogenization. Therefore we drop the subscript and write simple~$\ahom:= \ahom_2$. Recall that $\ahom$ satisfies the bounds
\begin{align*}
I_d \leq \ahom \leq \Lambda I_d. 
\end{align*}

We will argue next that the homogenized tensors of odd order vanish:
for every $n\in\N$, 
\begin{align}
\label{e.oddvanish}
\ahom_{2n+1} = 0.
\end{align}
This is a consequence of the following identity: for every $n\in\N$, 
\begin{align}
\label{e.magic.identity}
\left\langle 
\nabla \phi_n \a\nabla \phi_m -  \phi_{n-1} \a \phi_{m-1} 
\right\rangle
=
- \left\langle 
\nabla  \phi_{n+1} \a\nabla  \phi_{m-1} -  \phi_{n} \a \phi_{m-2} 
\right\rangle
\,.
\end{align}
To prove~\eqref{e.magic.identity}, we test the equation for $\phi_m$ with $\phi_n$ to obtain
\begin{align*}
\left\langle 
\nabla \phi_n \a\nabla \phi_m  
\right\rangle
&
=
\left\langle 
-\nabla \phi_n \a \phi_{m-1} + \phi_n \a\nabla  \phi_{m-1} 
+\phi_n \a \phi_{m-2} 
-\ahom_m\phi_n
\right\rangle
\\ & 
=
\left\langle 
-\nabla \phi_n \a \phi_{m-1} + \phi_n \a\nabla  \phi_{m-1} 
+\phi_n \a \phi_{m-2} 
\right\rangle
\,.
\end{align*}
We then test the equation for $ \phi_{n+1}$ with $ \phi_{m-1}$ to get
\begin{align*}
\left\langle 
\nabla  \phi_{m-1} \a\nabla  \phi_{n+1}  
\right\rangle
&
=
\left\langle 
-\nabla  \phi_{m-1} \a \phi_{n} +  \phi_{m-1} \a\nabla  \phi_{n} 
+ \phi_{m-1} \a \phi_{n-1} 
-\ahom_{n+1}  \phi_{m-1}
\right\rangle
\\ & 
=
\left\langle 
-\nabla  \phi_{m-1} \a \phi_{n} +  \phi_{m-1} \a\nabla  \phi_{n} 
+ \phi_{m-1} \a \phi_{n-1} 
\right\rangle
\,.
\end{align*}
Summing the previous two displays yields~\eqref{e.magic.identity}. 

\smallskip

Using that~$\phi_0=1$ and testing the equation for $\phi_1$ with $ \phi_{n-1}$, we get 
\begin{align*}
\ahom_n 
& 
=
\left\langle
\a\nabla \phi_{n-1} + \a \phi_{n-2} 
\right\rangle
=
-
\left\langle
\nabla \phi_1 \a\nabla \phi_{n-1} 
-
\phi_0 \a \phi_{n-2} 
\right\rangle
\,.
\end{align*}
Iterating~\eqref{e.magic.identity} then yields, for every~$k\in\N$ with~$1\leq k\leq n-1$, 
\begin{align}
\label{e.charlie.tock}
\ahom_n 
& 
=
(-1)^{k}
\left\langle
\nabla  \phi_{k} \a\nabla \phi_{n-k} -  \phi_{k-1} \a \phi_{n-k-1} 
\right\rangle
\,.
\end{align}
This yields a formula for the homogenized tensors of even order: for every $n\in\N$, 
\begin{align*}
\ahom_{2n} 
& 
=
(-1)^{n}
\left\langle
\nabla  \phi_{n} \a\nabla \phi_{n} 
-
 \phi_{n-1} \a \phi_{n-1} 
\right\rangle
\,.
\end{align*}
For homogenized tensors of odd order, 
we observe that 
the special case of $m=n+1$ in~\eqref{e.magic.identity} is
\begin{align}
\label{e.charlie.oddsout}
\left\langle 
\nabla \phi_n \a\nabla  \phi_{n+1} -  \phi_{n-1} \a \phi_{n} 
\right\rangle 
=
0.
\end{align}
We then apply~\eqref{e.charlie.tock} to obtain~\eqref{e.oddvanish}.

\subsection{The homogenized operator~\texorpdfstring{$\mathscr{A}$}{script-A}}

The main point of defining the correctors and homogenized tensors in the way we did in~\eqref{e.ahomm} and~\eqref{e.phim} above is to ensure the validity of the following statement. 
For any~$m\in\N$ and~$p\in \P_m$,
\begin{align}
\label{e.polyexact}
\psi := 
\sum_{m=0}^\infty
\nabla^m p \!:\! \phi_m
\quad\mbox{satisfies} \quad
-\nabla \! \cdot \a\nabla \psi = \mathscr{A} p \quad
\mbox{in} \ \Rd\,,
\end{align}
where $\mathscr{A}p$ is a polynomial belonging to~$\P_{m-2}$ and is defined by
\begin{align}
\label{e.mathscrA.def}
(\mathscr{A}p)(x)
:=
- \sum_{k=1}^\infty \ahom_{2k} \!:\! \nabla^{2k} p(x)
\,.
\end{align}
This assertion can be checked by a direct computation, using only~\eqref{e.ahomm}--\eqref{e.phim}. 

\smallskip

We view $\mathscr{A}$ as an operator on space~$\P$ of real polynomials, and we call it the \emph{homogenized operator}. It can be written as the sum of the ``usual'' homogenized operator $-\nabla \! \cdot \ahom\nabla$ and higher-order terms:
\begin{align*}
\mathscr{A} = -\nabla\! \cdot \ahom \nabla + \mathscr{A}^\prime, 
\quad \mbox{where} \quad
\mathscr{A}^\prime 
:= 
-\sum_{k=2}^\infty  \ahom_{2k} \!:\! \nabla^{2k} \,.
\end{align*}
It is customary in the theory of periodic homogenization to ignore the higher-order part~$\mathscr{A}^\prime$ of the homogenized operator, and this suffices to many purposes. 
However, if precise large-scale regularity estimates or quantitative unique continuation estimates are desired, it is not always wise to substitute the ``real'' homogenized operator~$\mathscr{A}$ by its leading-order approximation,~$-\nabla\! \cdot \ahom\nabla$. 

\subsection{Harmonic approximation of \texorpdfstring{$\mathscr{A}$}{script-A}-harmonic polynomials}

If we want to find solutions of the equation $-\nabla \cdot \a \nabla \psi = 0$ with polynomial growth at infinity, then in view of~\eqref{e.polyexact} we should first find polynomials~$p$ satisfying the homogenized equation
\begin{align*}
\mathscr{A} p = 0. 
\end{align*}
It turns out that the linear space 
\begin{align*}
\left\{ p\in\P_m \,:\, \mathscr{A} p = 0 \right\}
\end{align*}
is a linear subspace of $\P_m$ with the same dimension as~$\H_m$. A proof can be found in~\cite{AKS}, for instance, or extracted from the proof of the following lemma.

\begin{lemma}[Harmonic approximation in $L^2(E_r)$]
\label{l.almost.identitymap.ballz}
There exists~$C(d,\Lambda)<\infty$ such that, for~$m\in\N$, $r\geq Cm^{\sfrac 72}$ and~$q\in \P_m$ satisfying~$\mathscr{A} q = 0$, there exists~$p\in \P_m$ such that $\nabla \cdot\ahom \nabla p=0$ and
\begin{align}
\label{e.project.scrA.H.ballaz}
\| q - p \|_{L^2(E_r)}
\leq 
\frac{Cm^{\sfrac {15}2}}{r^2}
\| q \|_{L^2(E_r)}
\,.
\end{align}

\end{lemma}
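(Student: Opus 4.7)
The plan is to correct $q$ by a single application of the Laplacian inverse $S$ after reducing to Euclidean geometry. Under the linear change of variables $y = \ahom^{-1/2}x$, the ellipsoid $E_r$ becomes the ball $B_r$, the operator $-\nabla_x\cdot\ahom\nabla_x$ becomes $-\Delta_y$, and the $L^2$ norms on the two domains differ only by a $(d,\Lambda)$-dependent Jacobian that will cancel in the ratio we need to bound. Setting $\tilde q(y) := q(\ahom^{1/2}y) \in \P_m$ and using the decomposition $\mathscr{A} = -\nabla\cdot\ahom\nabla + \mathscr{A}'$, the hypothesis $\mathscr{A}q = 0$ yields
\begin{equation*}
-\Delta\tilde q = \tilde h, \qquad \tilde h(y) := (\mathscr{A}'q)(\ahom^{1/2}y).
\end{equation*}
Since every term of $\mathscr{A}'$ in~\eqref{e.mathscrA.def} involves at least four derivatives, $\deg\tilde h \leq m-4$.

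Next I set $\tilde p := \tilde q + S\tilde h$, where $S$ is the operator from~\eqref{e.S.def}. Because $S$ raises degree by exactly two, $\tilde p \in \P_m$; and because $\Delta S = \mathrm{id}$ on $\P$, one has $-\Delta\tilde p = -\Delta\tilde q - \Delta S\tilde h = \tilde h - \tilde h = 0$. Pulling back via $p(x) := \tilde p(\ahom^{-1/2}x)$ produces the desired $\ahom$-harmonic polynomial $p \in \P_m$, and $\|p-q\|_{L^2(E_r)}$ equals $\|S\tilde h\|_{L^2(B_r)}$ up to the fixed Jacobian.

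For the quantitative estimate I would decompose $\tilde h$ into its homogeneous components $\tilde h_\ell$ of degree $\ell$, apply Lemma~\ref{l.inversion.Psi} to each, and estimate using the corrector bound $|\ahom_{2k}| \leq C^{2k}$ from~\eqref{e.corrector.bounds} together with the iterated Markov inequality~\eqref{e.reversePoincare}, which gives $\|\nabla^{2k}\tilde q\|_{L^2(B_r)} \leq m^{4k}r^{-2k}\|\tilde q\|_{L^2(B_r)}$. This leads, essentially, to
\begin{equation*}
\|S\tilde h\|_{L^2(B_r)} \leq \sum_{k\ge 2}\frac{r^2}{m-2k+1}\cdot\frac{C^{2k}m^{4k}}{r^{2k}}\,\|\tilde q\|_{L^2(B_r)}.
\end{equation*}
Under the hypothesis $r \geq Cm^{7/2}$ the ratio of successive terms is at most $Cm^4/r^2 \leq 1/2$, so the series is dominated by its $k=2$ contribution of order $m^7/r^2\cdot\|\tilde q\|_{L^2(B_r)}$; this is subsumed by the claimed $Cm^{15/2}/r^2$, and transferring back to the $x$-variables yields the estimate.

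The main technical obstacle is the bookkeeping when $q$ is not homogeneous: the homogeneous components $\tilde q_j$ of $\tilde q$ are not pairwise $L^2(B_r)$-orthogonal, so bounding the individual pieces $\|\tilde q_{\ell+2k}\|_{L^2(B_r)}$ in terms of $\|\tilde q\|_{L^2(B_r)}$ forces one to pass through the finer orthogonal expansion $\P_m^* = \bigoplus_i |y|^{2i}\H_{m-2i}^*$ from~\eqref{e.Pstarm.decomp}---which is $L^2(B_r)$-orthogonal because distinct $\H_n^*$ are orthogonal on the sphere---or else to absorb a polynomial-in-$m$ norm-equivalence factor on the finite-dimensional space $\P_m$. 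This is what accounts for the slight loosening from the naive exponent $m^7$ to the stated $m^{15/2}$.
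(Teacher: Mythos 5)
Your construction is going in the opposite direction from the paper, and this is actually the direction the lemma naturally asks for: the paper builds a linear map $K:\H_m\to\Ahom_m$ from harmonic to $\mathscr{A}$-harmonic polynomials by a recursion (since $\mathscr{A}'$ acts on the \emph{unknown} $\mathscr{A}$-harmonic polynomial, a fixed-point iteration $q_k=-S(p_k)$, $p_k=\sum_j\ahom_{2j}\!:\!\nabla^{2j}q_{k+1-j}$ is required), whereas you observe that going from a given $\mathscr{A}$-harmonic $\tilde q$ to a harmonic $\tilde p$ can be done in one shot, since $\Delta\tilde q=\widetilde{\mathscr{A}'q}$ has a \emph{known} right-hand side and you can just set $\tilde p:=\tilde q-S(\Delta\tilde q)$. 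This is a genuinely cleaner construction of $p$, and the resulting $p$ is indeed $\ahom$-harmonic and of degree at most $m$.

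However, the quantitative estimate has a real gap. You need $\|S\tilde h\|_{L^2(B_r)}\lesssim \|\tilde q\|_{L^2(B_r)}$, and your proposal is to split $\tilde h$ into homogeneous pieces, apply Lemma~\ref{l.inversion.Psi} to each, and pay a ``norm-equivalence'' factor for the projections $\tilde q\mapsto\tilde q^{(\ell+2k)}$. That factor is \emph{not} polynomial in $m$. The projection $\Pi_\ell:\P_m\to\P_\ell^*$ onto the degree-$\ell$ homogeneous component has operator norm on $L^2(B_r)$ that grows exponentially in $m$: take $\tilde q(y)=P_K(y_1)$ with $P_K$ the degree-$K$ Legendre polynomial on $[-1,1]$, whose leading coefficient is $\sim 2^K$ while $\|P_K\|_{L^2}\sim K^{-1/2}$, so $\|\Pi_K\tilde q\|/\|\tilde q\|\sim 2^K$. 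The decomposition~\eqref{e.Pstarm.decomp} does not rescue this: it is $L^2(B_r)$-orthogonal only within a fixed homogeneity degree $m$ (where it coincides with the spherical-harmonic grading), whereas for a non-homogeneous polynomial the pieces $|y|^{2i}\H_j^*$ and $|y|^{2i'}\H_j^*$ with the same harmonic degree $j$ but different $i,i'$ are \emph{not} orthogonal, and it is precisely this radial direction where the Hilbert-matrix-type conditioning goes exponential. The paper sidesteps this entirely by decomposing the \emph{harmonic} polynomial $p$ rather than the $\mathscr{A}$-harmonic one: the homogeneous components of a harmonic polynomial \emph{are} pairwise $L^2(B_r)$-orthogonal (spherical harmonics of distinct degree are orthogonal on every sphere), so $\sum_n\|p^{(n)}\|^2=\|p\|^2$ exactly, and the Cauchy--Schwarz step costs only the claimed $m^{1/2}$. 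Your argument as written does not have access to this orthogonality and would need a substantively different bound (for instance, a degree-independent estimate $\|S\|_{L^2(B_r)\to L^2(B_r)}\leq Cr^2$ applied directly to the \emph{unsplit} $\tilde h$), which you neither state nor prove.
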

\begin{proof}
We suppose that~$\ahom=I_d$ so that $E_r=B_r$.

\smallskip

Let $p\in\H_m^*$. 
We recursively define sequences~$\{ p_k\}\subseteq \P$ and $\{q_k\} \subseteq \P$ by
\begin{align}
\label{e.recursion.S}
\left\{
\begin{aligned}
&
p_0 = 0, \quad q_0 = p, \\
& 
p_k =  \sum_{j=2}^{k+1} 
\ahom_{2j} 
\!:\! \nabla^{2j} q_{k+1-j} , \\
& 
q_k = - S (p_k)
\,.
\end{aligned}
\right.
\end{align}
In particular, we have that
\begin{align}
\label{e.Delta.pk.E}
-\Delta q_{k} 
= 
p_k
\,.
\end{align}
Note that~$q_0=p\in \H^*_m \subseteq \P_m^*$.
Since~$S(\P_n^*)\subseteq \P_{n+2}^*$ for all $n\in\N_0$, by induction we have that~ $q_k \in \P_{m-2k}^*$ and $p_k\in \P_{m-2k-2}^*$ for every $k\in\N_0$. Moreover, we have that~$p_k = q_k = 0$ for every $k \geq \lfloor \frac m2 \rfloor $.
Summing~\eqref{e.Delta.pk.E} over~$k$, we find that the function~$q\in \P_m$ defined by
\begin{align*}
q := 
\sum_{k=0}^{\lfloor \frac m2 \rfloor-1}
q_k
\end{align*}
satisfies
\begin{align}
\label{e.Deltaq.compute}
-\Delta q 
= 
\sum_{k=1}^{\lfloor \frac m2 \rfloor-1}
p_k
&
=
\sum_{k=1}^{\lfloor \frac m2 \rfloor-1}
\sum_{j=2}^{k+1}
\ahom_{2j} \!:\! \nabla^{2j} q_{k+1-j}
\\ & \notag
=
\sum_{j=2}^{\lfloor \frac m2 \rfloor}
\sum_{k=j-1}^{\lfloor \frac m2 \rfloor-1}
\ahom_{2j} \!:\! \nabla^{2j} q_{k+1-j}
\\ & \notag
=
\sum_{j=2}^{\lfloor \frac m2 \rfloor}
\ahom_{2j} \!:\! \nabla^{2j}
\Biggl( \sum_{k=0}^{\lfloor \frac m2 \rfloor-j}
 q_{k} \Biggr)
=
\mathscr{A}'q
.
\end{align}
That is,~$\mathscr{A} q = 0$.

\smallskip

We next estimate the~$L^2(B_r)$ norm of each~$q_k$ and hence~$q-p$. 
Using~\eqref{e.reversePoincare} and the triangle inequality, we  have  
\begin{align*}
\left\| p_k \right\|_{L^2(B_r)}
& 
\leq
\biggl\| 
\sum_{j=2}^{k+1}
\ahom_{2j} \!:\! \nabla^{2j} q_{k+1-j}
\biggr\|_{L^2(B_r)} 
\\ & 
\leq
\sum_{j=2}^{k+1} 
\bigl| \ahom_{2j} \bigr|
\bigl\| 
\nabla^{2j} q_{k+1-j}
\bigr\|_{L^2(B_r)} 
\\ &
\leq
\sum_{j=2}^{k+1} 
\left(\frac{C(m-2(k+1-j))^2}{r} \right)^{2j}
\bigl\| 
q_{k+1-j}
\bigr\|_{L^2(B_r)} 
\,.
\end{align*}
Using this and~\eqref{e.Laplacian.inversion}, we obtain, for every $k\leq \lfloor \frac m2 \rfloor -1$,  
\begin{align}
\| q_{k} \|_{L^2(B_r)} 
&
\leq 
\frac{r^2}{(m-2k-1)} 
\| p_{k} \|_{L^2(B_r)} 
\\ & \notag
\leq
\frac{r^2}{(m-2k-1)} 
\sum_{j=2}^{k+1} 
\biggl(\frac{C(m-2k+2j)^2}{r} \biggr)^{\!2j}
\bigl\| 
q_{k+1-j}
\bigr\|_{L^2(B_r)} 
\,.
\end{align}
If~$r\geq Cm^{\sfrac {7}2}$ for a sufficiently large constant~$C$, it then follows by induction that, for every $k\in\{0,\ldots ,\lfloor \frac m2\rfloor-1\}$, 
\begin{align*}
\| q_{k} \|_{L^2(B_r)} 
\leq 
\biggl(\frac{C(m-2k)^{7}}{r^2} \biggr)^{\!k}
\| q_{0} \|_{L^2(B_r)} 
=
\biggl(\frac{C(m-2k)^{7}}{r^2} \biggr)^{\!k}
\| p \|_{L^2(B_r)}
\,.
\end{align*}
Squaring and summing this over~$k\in \{1,\ldots,\lfloor \frac m2\rfloor-1\}$ yields
\begin{align}
\label{e.projection.scrA.H.0}
\| q - p \|_{L^2(B_r)} 
\leq
\frac{Cm^{7}}{r^2}
\| p \|_{L^2(B_r)}
\,.
\end{align}

\smallskip

The mapping $p \mapsto q$ defined above is clearly linear on~$\H_m^*$ and may be extended to a linear operator on~$\H$. Let us denote this linear operator by~$K_r$. 
Fix a general (possibly nonhomogeneous)~$p\in\H_m$ and set $q= K_r(p)$. Write
\begin{align*}
p = \sum_{n=0}^m p^{(n)}, \quad p^{(n)} \in \H_n^*
\end{align*}
and set $q^{(n)} = K_r(p^{(n)})$. 
By the triangle inequality, we have
\begin{align*}
\| q - p \|_{L^2(E_r)} 
& 
\leq
\sum_{n=0}^{m}
\bigl\| q^{(n)} - p^{(n)} \bigr\|_{L^2(B_r)}
\\ & 
\leq 
\sum_{n=0}^{m}
\frac{Cn^{7}}{r^2}
\bigl\| p^{(n)} \bigr\|_{L^2(B_r)}
\\ & 
\leq 
\biggl( 
\sum_{n=0}^{m}
\frac{Cn^{14}}{r^4}
\biggr)^{\!\!\sfrac12} 
\biggl( 
\sum_{n=0}^{m}
\bigl\| p^{(n)} \bigr\|_{L^2(B_r)}^2
\biggr)^{\!\!\sfrac12} 
=
\biggl( 
\frac{Cm^{15}}{r^4}
\biggr)^{\!\!\sfrac12} 
\| p \|_{L^2(B_r)} 
\,.
\end{align*}
This completes the proof of the lemma. 
\end{proof}

\subsection{Heterogeneous polynomials}

The correctors give us a parametrization of a large-family of solutions of the heterogeneous equation~\eqref{e.pde} in~$\Rd$ which turns out to be precisely the set of the solutions which grow at most like polynomial at infinity. 

\smallskip

By~\eqref{e.polyexact}, for every polynomial~$q\in\Ahom_m$, 
the function defined by 
\begin{align}
\label{e.psi.rep}
\psi (x) := 
\sum_{m=0}^\infty
\nabla^m q(x) \!:\! \phi_m(x)
\end{align}
is a solution of
\begin{align}
\label{e.pde.again}
- \nabla \! \cdot \a\nabla \psi=0
\quad \mbox{in} \ \Rd
\,.
\end{align}
In the previous section, we found an explicit and complete parametrization of the~$\mathscr{A}$-harmonic polynomials, putting them into one-to-one correspondence with the~$\ahom$-harmonic polynomials with quantitative bounds on the differences between these. 

\smallskip

We denote the set of~$\mathscr{A}$-harmonic polynomials of degree at most~$m\in\N$ by 
\begin{align}
\Ahom_{m} 
:= 
\bigl\{ q \in \P_m 
\,:\, 
\mathscr{A}q = 0 
\bigr\}
\,.
\end{align}
Likewise, we denote the space of $\a(x)$-harmonic ``heterogeneous polynomials'' of degree at most~$m\in\N$ by 
\begin{align}
\A_m := 
\biggl\{ 
\psi \in H^1_{\mathrm{loc}}(\Rd)  \, :\, 
\psi = \sum_{m=0}^\infty
\nabla^m q \!:\! \phi_m \,,
\ q \in \Ahom_m 
\biggr\}
\,.
\end{align}
We also put $\Ahom = \cup_{m\in\N} \Ahom_m$ and $\A := \cup_{m\in\N} \A_m$. 

\smallskip

We next present an estimate on the difference between an element~$\psi \in \A$ and the~$\mathscr{A}$-harmonic polynomial in its representation~\eqref{e.psi.rep}.

\begin{lemma}
\label{l.psi.to.mathscrA}
There exists~$C(d,\Lambda)<\infty$ such that, for every~$m\in\N$, $r\geq Cm^2$ and~$\psi\in \A_m$ and~$q\in \Ahom_m$ satisfying 
\begin{align}
\label{e.psi.q}
\psi = \sum_{n=0}^m \nabla^n q  \!:\! \phi_n
\,,
\end{align}
we have the estimate
\begin{align}
\label{e.psi.to.mathscrA}
\left\| \psi - q \right\|_{L^2(E_r)}
\leq 
\frac {Cm^{2}}{r}
\left\| \psi \right\|_{L^2(E_r)}
\,.
\end{align}
Consequently, for every~$\psi\in \A_m$, there exists~$p\in \P_m$ such that~$\nabla \cdot \ahom\nabla p=0$ and, for every~$m\in\N$ and~$r\geq Cm^{4}$,
\begin{align}
\label{e.project.scrA.H}
\left\| \psi - p \right\|_{L^2(E_{r})}
\leq 
\frac {Cm^4 }{r}
\left\| \psi \right\|_{L^2(E_{r})}
\,.
\end{align}
\end{lemma}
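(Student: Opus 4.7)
The plan is to prove~\eqref{e.psi.to.mathscrA} directly from the representation formula, and then to deduce~\eqref{e.project.scrA.H} by combining this with Lemma~\ref{l.almost.identitymap.ballz}.

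For the first estimate, note that since $\phi_0 \equiv 1$, the representation~\eqref{e.psi.q} yields
\begin{align*}
\psi - q = \sum_{n=1}^{m} \nabla^n q \!:\! \phi_n\,.
\end{align*}
I would bound each term using two ingredients: the pointwise corrector estimate $\|\phi_n\|_{L^\infty(\Rd)} \leq C^n$ from~\eqref{e.corrector.bounds}, and the iterated $L^2$ Markov inequality $\|\nabla^n q\|_{L^2(E_r)} \leq (Cm^2/r)^n \|q\|_{L^2(E_r)}$, obtained by applying~\eqref{e.reversePoincare} $n$ times to $q\in \P_m$. Since~\eqref{e.reversePoincare} is stated for Euclidean balls, I first transfer it to the ellipsoid $E_r$ via the change of variables $y = \ahom^{-\sfrac12} x$, which maps $E_r$ onto $B_r$ and introduces only multiplicative constants depending on $(d,\Lambda)$ because $I_d \leq \ahom \leq \Lambda I_d$. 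Combining the two bounds and summing over $n$, the resulting series $\sum_{n=1}^m (Cm^2/r)^n$ is geometric and converges provided $r \geq Cm^2$, producing
\begin{align*}
\|\psi - q\|_{L^2(E_r)} \leq \frac{Cm^2}{r}\,\|q\|_{L^2(E_r)}\,.
\end{align*}
To replace $q$ by $\psi$ on the right, I insert $\|q\|_{L^2(E_r)} \leq \|\psi\|_{L^2(E_r)} + \|\psi - q\|_{L^2(E_r)}$ and absorb the small contribution, which is justified as soon as $Cm^2/r \leq \sfrac12$. This gives~\eqref{e.psi.to.mathscrA} and, as a byproduct, $\|q\|_{L^2(E_r)} \leq 2\|\psi\|_{L^2(E_r)}$.

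For the consequence~\eqref{e.project.scrA.H}, I apply Lemma~\ref{l.almost.identitymap.ballz} to the $\mathscr{A}$-harmonic polynomial $q$ associated with $\psi$, obtaining $p \in \P_m$ with $\nabla \cdot \ahom \nabla p = 0$ and $\|q - p\|_{L^2(E_r)} \leq (Cm^{\sfrac{15}2}/r^2)\,\|q\|_{L^2(E_r)}$, valid for $r \geq Cm^{\sfrac72}$. The triangle inequality together with~\eqref{e.psi.to.mathscrA} and the bound $\|q\|_{L^2(E_r)} \leq 2\|\psi\|_{L^2(E_r)}$ then produces
\begin{align*}
\|\psi - p\|_{L^2(E_r)} \leq \left(\frac{Cm^2}{r} + \frac{Cm^{\sfrac{15}2}}{r^2}\right) \|\psi\|_{L^2(E_r)}\,.
\end{align*}
For $r \geq m^{\sfrac72}$, the inequality $m^{\sfrac{15}2}/r^2 \leq m^4/r$ holds, so both contributions are dominated by $Cm^4/r$, and the hypothesis $r \geq Cm^4$ comfortably covers all the constraints accumulated along the way.

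The only delicate bookkeeping point is ensuring that in the first step the factor $C^n$ coming from the correctors is dominated by $(Cm^2/r)^n$ uniformly in $n \leq m$; this is what forces the threshold $r \gtrsim m^2$. Everything past that is routine triangle-inequality accounting, with the dominant error in the second step carried by the quantitative estimate from Lemma~\ref{l.almost.identitymap.ballz}.
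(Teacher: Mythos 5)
Your argument is correct and follows the same route as the paper: expand $\psi - q = \sum_{n\geq 1} \nabla^n q : \phi_n$, bound each term via the corrector sup-norm estimate~\eqref{e.corrector.bounds} and iterated Markov~\eqref{e.reversePoincare}, sum the geometric series under $r\geq Cm^2$, absorb to convert $\|q\|$ into $\|\psi\|$, and then chain with Lemma~\ref{l.almost.identitymap.ballz} by the triangle inequality. Your write-up is if anything a bit more careful than the paper's, which leaves the ball-to-ellipsoid change of variables and the absorption step implicit; the arithmetic $m^{\sfrac{15}{2}}/r^2 \leq m^4/r$ for $r\geq m^{\sfrac72}$ that reconciles the two error terms is exactly right.
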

\begin{proof}
By~\eqref{e.reversePoincare} and~\eqref{e.warmup.bounds}, we have 
\begin{align*}
\left\| \psi - q \right\|_{L^2(E_{r})}
&
\leq
\sum_{n=1}^m 
\bigl\| \phi_n \bigr\|_{L^\infty(\Rd)}
\left\| \nabla^n q \right\|_{L^2(E_{R})}
\leq 
\sum_{n=1}^m 
\biggl( \frac{Cm^2}{r} \biggr)^{\!\! n} 
\left\| q \right\|_{L^2(E_{R})}
\,.
\end{align*}
If~$r \geq Cm^{2}$ for $C$ sufficiently large, then 
\begin{align*}
\sum_{n=1}^m 
\biggl( \frac{Cm^2}{r} \biggr)^{\!\! n}
\leq
\frac{Cm^{2}}{r}
\end{align*}
and so we obtain~\eqref{e.psi.to.mathscrA} for every $r\geq Cm^{2}$.

\smallskip

The estimate~\eqref{e.project.scrA.H} is a consequence of Lemma~\ref{l.almost.identitymap.ballz},~\eqref{e.psi.to.mathscrA} and the triangle inequality.
\end{proof}

As a consequence of the previous lemma, we can prove a three-ball inequality for elements of~$\A_m$. 

\begin{proposition}
\label{p.threeball.Am}
There exists $C(d,\Lambda)<\infty$ such that, for every $m\in\N$, $\psi\in\A_m$~$\theta \in (0,1)$ and $r\geq C \theta^{-2} m^4$,
we have the estimate
\begin{align}
\label{e.threeball.heat.Am}
\frac
{\left\| \psi \right\|_{L^2(E_{\theta r})}^2}
{\left\| \psi \right\|_{L^2(  E_{\theta^2 r})}\left\| \psi \right\|_{L^2(E_{r})}}
\leq 
1 + \frac{Cm^4}{\theta^2 r}
\,.
\end{align}
\end{proposition}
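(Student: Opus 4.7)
The plan is to reduce the three-ellipsoid inequality for $\psi \in \A_m$ to the corresponding statement for an $\ahom$-harmonic polynomial, which in turn reduces via a linear change of variables to the classical three-ball inequality for harmonic polynomials on Euclidean balls. The bridge between $\psi$ and its polynomial approximant is provided by the estimate~\eqref{e.project.scrA.H} of Lemma~\ref{l.psi.to.mathscrA}.

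First, I would apply Lemma~\ref{l.psi.to.mathscrA} to produce a polynomial $p \in \P_m$ with $\nabla \cdot \ahom \nabla p = 0$ satisfying, for every $s \geq C m^4$,
\[
\left\|\psi - p\right\|_{L^2(E_s)} \leq \frac{Cm^4}{s}\left\|\psi\right\|_{L^2(E_s)}.
\]
Applied at each of the three scales $s \in \{\theta^2 r, \theta r, r\}$ and combined with the triangle inequality, this yields two-sided comparisons between $\|\psi\|_{L^2(E_s)}$ and $\|p\|_{L^2(E_s)}$ with relative errors of order $Cm^4/s$. The hypothesis $r \geq C\theta^{-2} m^4$ guarantees that the largest such error, $Cm^4/(\theta^2 r)$, is at most $\tfrac12$, so these comparisons can be freely inverted.

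Next, I would establish the three-ellipsoid inequality for $\ahom$-harmonic polynomials with constant $1$ and exponent $\tfrac12$:
\[
\left\|p\right\|_{L^2(E_{\theta r})}^2 \leq \left\|p\right\|_{L^2(E_{\theta^2 r})}\left\|p\right\|_{L^2(E_r)}.
\]
Under the linear change of variables $y = \ahom^{-\sfrac12} x$, the ellipsoid $E_s$ becomes the ball $B_s$ and $\tilde p(y) := p(\ahom^{\sfrac12} y)$ becomes harmonic, while the Jacobian $|\det \ahom^{\sfrac12}|^{\sfrac12}$ factors out identically from all three norms. It therefore suffices to verify that $\|\tilde p\|_{L^2(B_{\theta r})}^2 \leq \|\tilde p\|_{L^2(B_{\theta^2 r})}\|\tilde p\|_{L^2(B_r)}$. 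Decomposing $\tilde p$ into homogeneous harmonic polynomials and using their $L^2(\partial B_1)$-orthogonality gives $\|\tilde p\|_{L^2(B_s)}^2 = \sum_n \alpha_n s^{2n+d}$ with $\alpha_n \geq 0$. Writing $f(t) := \|\tilde p\|_{L^2(B_{e^t})}^2$, a direct application of Cauchy--Schwarz to the resulting exponential sums gives $f f'' \geq (f')^2$, so $\log f$ is convex. This is exactly the convexity~\eqref{e.threespheres} in the harmonic case, and evaluating it at the midpoint $t = \tfrac12(\log(\theta^2 r) + \log r)$ yields the claimed inequality.

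Combining the two ingredients yields
\[
\frac{\left\|\psi\right\|_{L^2(E_{\theta r})}^2}{\left\|\psi\right\|_{L^2(E_{\theta^2 r})}\left\|\psi\right\|_{L^2(E_r)}}
\leq
\frac{(1 + Cm^4/(\theta^2 r))(1 + Cm^4/r)}{(1 - Cm^4/(\theta r))^2}\,.
\]
Provided that the constant $C$ in the hypothesis $r \geq C\theta^{-2} m^4$ is chosen large enough, the product of these four factors is bounded by $1 + Cm^4/(\theta^2 r)$, matching~\eqref{e.threeball.heat.Am}. The principal technical point in the entire argument is the bookkeeping of these multiplicative errors; I do not anticipate a serious obstacle, since Lemma~\ref{l.psi.to.mathscrA} already does the heavy lifting of controlling $\psi - p$ sharply on every relevant scale.
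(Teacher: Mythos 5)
Your proof is correct and follows essentially the same route as the paper: apply Lemma~\ref{l.psi.to.mathscrA} at the three scales $\theta^2 r$, $\theta r$, $r$ to pass between $\psi$ and an $\ahom$-harmonic polynomial $p$, invoke the three-ellipsoid inequality for $p$ with exponent $\sfrac12$ and constant $1$, and absorb the multiplicative errors $1 + O(m^4/(\theta^2 r))$ using the hypothesis $r \geq C\theta^{-2}m^4$. The only difference is that you spell out the proof of the $\ahom$-harmonic three-ellipsoid inequality (change of variables $y = \ahom^{-\sfrac12}x$ plus log-convexity from $L^2$-orthogonality of homogeneous harmonics), whereas the paper simply cites it as ``the three-ball inequality for $\ahom$-harmonic functions''; that is extra explicitness rather than a different argument.
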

\begin{proof}
According to Lemma~\ref{l.psi.to.mathscrA},
we may select~$p\in\P_m$ with $\nabla \cdot\ahom\nabla p=0$ such that,
for every $r\geq Cm^4$, 
\begin{align*}
\left\| \psi - p \right\|_{L^2(E_{r})}
\leq 
\frac {Cm^4 }{r}
\left\| \psi  \right\|_{L^2(E_{r})}
\,.
\end{align*}
Using the triangle inequality and the three-ball inequality for $\ahom$-harmonic functions, 
we obtain, 
for every~$r \geq C \theta^{-2} m^4$,
\begin{align*}
\left\| \psi \right\|_{L^2(E_{\theta r})}^2
&
\leq
\biggl( 1 + \frac{Cm^4}{\theta r} \biggr)^{\!\!2}
\left\| p \right\|_{L^2(E_{\theta r})}^2 
\\ &
\leq 
\biggl( 1 + \frac{Cm^4}{\theta r} \biggr)^{\!\!2}
\left\| p \right\|_{L^2(E_{\theta^2 r})} 
\left\| p \right\|_{L^2(E_{r})}
\\ & 
\leq
\biggl( 1 + \frac{Cm^4}{\theta^2 r} \biggr)^{\!\!4} \!
\left\| \psi \right\|_{L^2(E_{\theta^2 r})} 
\left\| \psi \right\|_{L^2(E_{r})}
\,.
\end{align*}
This completes the proof. 
\end{proof}

A classical ``Liouville-type'' result of Avellaneda and Lin~\cite{AL2} asserts that every solution~$\psi$ of~\eqref{e.pde.again} which grows at most like a polynomial at infinity must be an element of~$\A$. 
A quantitative version of this Liouville result, known as the \emph{large-scale~$C^{m,1}$ estimate} in homogenization, gives an estimate of how well an arbitrary solution of~\eqref{e.pde} in a large (but finite) ball can be approximated by an element of~$\A_m$ in a smaller ball. The main result of~\cite{AKS} was an optimal quantification of this estimate---in the dependence of the prefactor constant and the minimal scale on the degree~$m$ of the polynomial---which we call \emph{large-scale analyticity}. We next recall the statement of this result stated here in terms of the ellipsoids $\{ E_r\}$ rather than cubes as in~\cite{AKS}, which is equivalent.

\begin{proposition}[{\cite[Theorem 1.1]{AKS}}]
\label{p.analyticity}
There exists a constant~$C(d,\Lambda)<\infty$ such that, for every $m\in\N$, $R\in [Cm,\infty)$ and solution $u\in H^1(E_R)$ of
\begin{equation}
-\nabla \cdot \a\nabla u = 0 \quad \mbox{in} \ E_R,
\end{equation}
there exists $\psi \in \A_{m}$ such that, for every $r\in \left[Cm, R\right]$,
\begin{equation}
\label{e.Cm1}
\left\| u - \psi \right\|_{\underline{L}^2(E_r)} 
\leq 
\left( \frac{Cr}{R} \right)^{m+1} \left\| u \right\|_{\underline{L}^2(E_R)}. 
\end{equation}
\end{proposition}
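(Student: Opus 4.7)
The plan is a Campanato-style excess decay iteration, in the spirit of Avellaneda--Lin's large-scale $C^{k,\alpha}$ regularity, but with the ordinary polynomial class replaced by the corrector-extended space $\A_m$. This replacement is exactly what allows sharp linear $m$-dependence in the minimal scale. Define
\begin{equation*}
\Phi_m(u; r) := \inf_{\psi \in \A_m} \|u - \psi\|_{\underline{L}^2(E_r)}.
\end{equation*}
By a triangle-inequality argument that selects the $\psi$ attaining $\Phi_m(u; Cm)$ and uses it at all smaller radii, the estimate~\eqref{e.Cm1} reduces to the one-scale decay
\begin{equation*}
\Phi_m(u; \theta r) \leq \tfrac12 \theta^{m+1} \Phi_m(u; r), \qquad r \in [Cm, R],
\end{equation*}
for some fixed $\theta(d,\Lambda) \in (0, \tfrac14]$, iterated from $r = R$ down to scales $\sim Cm$ and combined with the trivial initial bound $\Phi_m(u;R) \leq \|u\|_{\underline{L}^2(E_R)}$.

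To prove the excess decay, let $\psi_0 \in \A_m$ realize $\Phi_m(u; r)$ and set $v := u - \psi_0$; then $v$ solves the same periodic equation on $E_r$ with $\|v\|_{\underline{L}^2(E_r)} = \Phi_m(u; r)$. The key technical step is a quantitative high-order two-scale expansion for $v$: construct $\tilde q \in \Ahom_{M}$ with $M = 2m+1$ whose corrector extension $\tilde\Psi := \sum_{n\leq M} \nabla^n \tilde q \!:\! \phi_n \in \A_M$ satisfies
\begin{equation*}
\|v - \tilde\Psi\|_{\underline{L}^2(E_{r/2})} \leq (C/r)^{m+2}\,\|v\|_{\underline{L}^2(E_r)}.
\end{equation*}
The polynomial $\tilde q$ is built by applying the inversion operator $S$ of Lemma~\ref{l.inversion.Psi} (iteratively, to account for the higher-order part $\mathscr{A}'$) to a polynomial projection of $v$, thereby producing an $\mathscr{A}$-harmonic polynomial matching $v$ to the required order; the remainder is then controlled via a Caccioppoli/energy argument applied to the explicit PDE satisfied by $v - \tilde\Psi$, using the corrector bounds $\|\phi_n\|_\infty \leq C^n$ from~\eqref{e.corrector.bounds}.

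Next, split $\tilde q = q + q'$ with $q \in \Ahom_m$ collecting components of degree $\leq m$ and $q' \in \Ahom_M$ collecting degrees $m{+}1,\ldots,M$, and $\tilde\Psi = \Psi + \Psi'$ correspondingly, so $\Psi \in \A_m$. Since every homogeneous component of $q'$ has degree $\geq m{+}1$, direct polynomial scaling for homogeneous pieces, combined with the Markov inequality~\eqref{e.reversePoincare} and the corrector bound $\|\phi_n\|_\infty \leq C^n$, gives $\|\Psi'\|_{\underline{L}^2(E_{\theta r})} \leq C\theta^{m+1}\|\Psi'\|_{\underline{L}^2(E_{r/2})}$ whenever $r \geq Cm/\theta$. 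Combining with the two-scale remainder above and noting that $\psi_0 + \Psi \in \A_m$ yields the excess decay, with the constant absorbed into $\tfrac12$ by choosing $\theta$ small enough.

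The main obstacle is the quantitative high-order two-scale expansion. A naive bookkeeping in the corrector ansatz accrues factorial $m!$-type prefactors from differentiating $\tilde q$ and from nested corrector products, which would degrade the minimal scale from $Cm$ to $Cm^p$ with $p>1$. Achieving the sharp linear scale requires genuinely exploiting the cancellation structure of~\eqref{e.ahomm}--\eqref{e.phim}, in particular the vanishing of odd-order tensors~\eqref{e.oddvanish} and the mean-zero constraint on each $\phi_m$, so that the only $m$-dependent growth in the expansion is the geometric $C^n$ in~\eqref{e.corrector.bounds} rather than anything factorial, together with Caccioppoli cutoffs engineered to cost only a factor $C/r$ (not $Cn/r$) per order of expansion.
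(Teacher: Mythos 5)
First, note that the paper does not prove this proposition at all: it is imported verbatim from \cite[Theorem 1.1]{AKS} (restated for ellipsoids instead of cubes), so there is no internal argument to compare against. Your outline is in fact structurally close to how such results are proved — a Campanato-type excess decay over the corrector-extended classes $\A_m$, with a high-order two-scale expansion supplying the one-step improvement — but as a proof it has a decisive gap. The ``key technical step,'' namely the existence of $\tilde\Psi\in\A_{2m+1}$ with $\|v-\tilde\Psi\|_{\underline{L}^2(E_{r/2})}\leq (C/r)^{m+2}\|v\|_{\underline{L}^2(E_r)}$ for a constant $C$ \emph{independent of} $m$, is asserted rather than proved, and it is essentially equivalent to the theorem itself (it is a single-scale instance of~\eqref{e.Cm1} at degree $2m+1$). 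Your closing paragraph correctly diagnoses that the naive corrector ansatz loses factorial factors and that one must exploit the cancellation structure of~\eqref{e.ahomm}--\eqref{e.phim} to keep the growth geometric — but that diagnosis is precisely the content of the hard estimates in \cite{AKS}, and no mechanism for achieving it is supplied here. Everything downstream of this step is conditional on it.

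There is also a concrete error in the splitting step. You truncate $\tilde q\in\Ahom_{2m+1}$ by degree into $q$ (degrees $\leq m$) and $q'$ (degrees $m{+}1,\dots,2m{+}1$) and claim both remain $\mathscr{A}$-harmonic. This fails in general: by~\eqref{e.mathscrA.def}, $\mathscr{A}$ sends a homogeneous polynomial of degree $n$ to a sum of homogeneous pieces of degrees $n-2, n-4,\dots$, so the degree-$j$ component of $\mathscr{A}\tilde q=0$ couples $\tilde q^{(j+2)},\tilde q^{(j+4)},\dots$ across the cut at degree $m$. Hence $\mathscr{A}q\neq 0$ in general, the corrector extension $\Psi=\sum_n\nabla^n q:\phi_n$ is then not $\a$-harmonic by~\eqref{e.polyexact}, and $\psi_0+\Psi\notin\A_m$, which breaks the final comparison defining the excess at scale $\theta r$. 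This particular defect is repairable — one should project onto $\Ahom_m$ using the parametrization of $\mathscr{A}$-harmonic polynomials by their harmonic leading parts (cf.\ the construction in Lemma~\ref{l.almost.identitymap.ballz}), adding lower-degree corrections rather than simply discarding high-degree components — but as written the decomposition is not valid, and in any case the argument cannot be completed without an actual proof of the $m$-uniform two-scale expansion estimate.
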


The polynomial~$\psi\in\A_m$ in the statement of Proposition~\ref{p.analyticity} satisfies the following growth estimate: for every~$r\in [Cm,R]$ and~$s\in [Cm,\infty)$,
\begin{align}
\label{e.growth.psi}
\left\| \psi \right\|_{\underline{L}^2(E_s)}
\leq
\biggr( \frac{Cs}{r} \biggr)^{\!\!m+1}
\left\| \psi \right\|_{\underline{L}^2(E_r)} 
\leq
\biggr( \frac{Cs}{r} \biggr)^{\!\!m+1}
\left\| u \right\|_{\underline{L}^2(E_r)} 
\,.
\end{align}
The first inequality is a consequence of~\cite[Lemmas 2.7 \& 3.1]{AKS} and the second is immediate from~\eqref{e.Cm1} with~$R=r$.

\section{Doubling and three-ball inequalities at large scales}
\label{s.mainproofs}

We now present the proof of Theorem~\ref{t.threeball}, which breaks into two steps. 
In the first step, we take advantage of the better harmonic approximation (thanks to Lemma~\ref{l.psi.to.mathscrA}) on scales larger than~$C(\log M)^4$ to obtain more precise estimates. By performing an iteration down the scales, on these ``very large'' scales we obtain a three-ellipsoid estimate with constant close to one and a doubling estimates with ratio~$M^C$.
In the second step of the proof, we pass crudely from scale $C(\log M)^4$ to scale~$C\log M$ using an argument more similar to~\cite[Theorem 1.4]{AKS}---but as this involves only~$C \log \log M$ many scales, we escape with only a slight inflation of the doubling ratio, from $M^C$ to $M^{C\log\log M}$. 

\smallskip

\begin{proof}[{Proof of Theorem~\ref{t.threeball}}]

Let~$\theta \in (0,\sfrac12]$ and $K,L\in [2,\infty]$ be a fixed constants which will be selected below and will depend only on~$(d,\Lambda)$.  
Let $M \in [5,\infty)$, $R \in [ C K (\log M)^4, \infty)$ and $u\in H^1(E_R)$ be a solution of 
\begin{align*}
-\nabla \cdot \a \nabla u = 0 \quad \mbox{in} \ E_R
\end{align*}
which satisfies the doubling assumption
\begin{align}
\label{e.double.ass}
\left\| u \right\|_{\underline{L}^2(E_{R})}
\leq
M \left\| u \right\|_{\underline{L}^2(E_{\theta^6 R})} 
\,.
\end{align}
Note that, unlike~\eqref{e.initial.doubling}, we have~$\theta^6R$ instead of~$\theta R$ on the right side. It suffices to prove the theorem under the stronger assumption of~\eqref{e.double.ass}, since we can then replace~$\theta$ by~$\theta^6$ and replace~$C$ by $6C$.

\smallskip

We argue by induction, going down the scales. Define, for each $r\in[1,R]$,
\begin{align*}
N(r):= \sup_{s \in [r , R]}
\frac{\left\| u \right\|_{\underline{L}^2(E_{s})}}{ \left\| u \right\|_{\underline{L}^2(E_{\theta s})} }
\,.
\end{align*}
Suppose that~$r_0 \in [1,R]$ and $L \in [1,\infty)$ satisfy
\begin{align}
\label{e.double.induct.hyp}
r_0 \in [ C \theta^{-2}  K^4 (\log M)^4, \theta^5 R]
\quad \mbox{and} \quad
N(r_0)
\leq 
LM.
\end{align}
By the doubling assumption~\eqref{e.double.ass}, we have that~\eqref{e.double.induct.hyp} is valid for~$r_0 = \theta^5 R$ and $L=\theta^{-3d}$. 
We will argue that~\eqref{e.double.induct.hyp} implies that 
\begin{align}
\label{e.double.induct.wrap}
N(\theta r_0) \leq \biggl(1 + \frac{CK^4 (\log M)^4}{r_0} + M^{-100 \log\frac{R}{r_0}} \biggl)  N(r_0) 
\,,
\end{align}
provided that~$\theta$ is taken sufficiently small, depending only on~$(d,\Lambda)$, and~$K$ is chosen sufficiently large, depending only on~$(L,d,\Lambda)$. 

\smallskip

If we tentatively admit the assertion that~\eqref{e.double.induct.hyp} implies~\eqref{e.double.induct.wrap}, with the dependencies for the constants as explained above, we would then be able to iterate this to get, by induction, that, for a constant~$C(d,\Lambda)$,
\begin{align*}
N( C K^4 ( \log M)^4) \leq CM.
\end{align*}
We could then choose $L=C(d,\Lambda)$, which then removes this dependence from~$K$ to allow it to depend only on~$(d,\Lambda)$, and we would obtain
\begin{align}
\label{e.doubling.logM4.pf}
\sup_{r \in [ C(\log M)^4 , R]}
\frac{\left\| u \right\|_{\underline{L}^2(E_{r})}}{ \left\| u \right\|_{\underline{L}^2(E_{\theta r})} }
\leq 
CM\,.
\end{align}

\smallskip

We focus therefore on the proof that~\eqref{e.double.induct.hyp} implies~\eqref{e.double.induct.wrap}. Assuming $r_0$ and $L$ satisfy~\eqref{e.double.induct.hyp}, we
select~$m\in\N$ with~$m \geq K (\log M)$ and $C\theta^{-2} m^4 \leq r_0$. This choice of~$m$ is permitted by the lower bound on~$r_0$ in~\eqref{e.double.induct.hyp}.
We now apply large-scale analyticity (Proposition~\ref{p.analyticity})  to obtain~$\psi \in \A_m$ satisfying, for every~$r,s\in [Cm,R]$ with $r\leq s$, 
\begin{align}
\label{e.star}
\left\| u - \psi \right\|_{\underline{L}^2(E_r)}
\leq 
\biggl( \frac{Cr}{s} \biggr)^{\!\! m+1}
\left\| u  \right\|_{\underline{L}^2(E_s)}
\,.
\end{align}
By Proposition~\ref{p.threeball.Am}, for every $r \in  [r_0,r_0/\theta]$, 
\begin{align}
\label{e.apply.threeball.Am}
\frac
{\left\| \psi \right\|_{L^2(E_{\theta r})}^2}
{\left\| \psi \right\|_{L^2(  E_{\theta^2 r})}\left\| \psi \right\|_{L^2(E_{r})}}
\leq 
1 + \frac{Cm^4}{\theta^2 r}
\,.
\end{align}
We would like to replace each~$\psi$ on the left side of~\eqref{e.apply.threeball.Am} with~$u$, and this will be accomplished with the help of~\eqref{e.double.induct.hyp} and~\eqref{e.star}. 
Choose~$k\in\N$ such that $\theta^{k} R < r_0 \leq \theta^{k-1} R$ and, noting that $k\geq 3$ by~\eqref{e.double.induct.hyp}, apply~\eqref{e.double.ass} and~\eqref{e.double.induct.hyp} to obtain,
for every~$r \in [\theta r_0,r_0/\theta]$,
\begin{align}
\label{e.powers.and.logs}
\frac{\left\| u \right\|_{\underline{L}^2(E_{\theta^{2-k}r})}}{\left\| u \right\|_{\underline{L}^2(E_{r})}}
=
\prod_{j=1}^{k-2}
\frac{\left\| u \right\|_{\underline{L}^2(E_{\theta^{-j} r})}}{\left\| u \right\|_{\underline{L}^2(E_{\theta^{1-j}r})}}
\leq
(N(r_0))^{k-2}
&
\leq
(LM)^{ \log (R/r_0) / | \log \theta |}
\\ &  \notag
=
\biggl( \frac{R}{r_0} \biggr)^{\!\! (\log (LM)) / | \log \theta|} 
\,.
\end{align}
For any such~$r$, the right side of~\eqref{e.star} for~$s=\theta^{2-k} r \geq \theta^3 R$ may therefore be estimated by
\begin{align}
\label{e.Ktheta}
\biggl( \frac{Cr}{\theta^{-k} r} \biggr)^{\!\! m+1} \left\| u \right\|_{\underline{L}^2(E_{\theta^{2-k} r})}
& 
\leq
\biggl( \frac{Cr_0}{\theta^4 R} \biggr)^{\!\! m+1} 
\biggl( \frac{R}{r_0} \biggr)^{\!\! (\log (LM)) / | \log \theta|} 
\left\| u \right\|_{\underline{L}^2(E_{r})}
\\ & \notag
\leq
\biggl( \frac{C}{\theta^4} \biggr)^{\!\!m+1} 
\biggl( \frac{r_0}{R} \biggr)^{\!\! m - (\log (LM)) / | \log \theta|} 
\left\| u \right\|_{\underline{L}^2(E_{r})}
\,.
\\ & \notag
\leq 
\biggl( \biggl( \frac{C{r_0}}{R} \biggr)^{\!\! \sfrac m2} \wedge \frac12 \biggr)
\left\| u \right\|_{\underline{L}^2(E_{r})}
\,,
\end{align}
where the last line is obtained after first choosing~$K(\theta,L,d,\Lambda)$ large enough that $m\geq K \log M$ implies that $m - (\log (LM)) / | \log \theta| \geq \frac{9}{10} m$
and then selecting~$\theta(d,\Lambda)\in (0,\sfrac12]$ sufficiently small and using that~$r_0 \leq \theta^5 R$.

\smallskip

By the triangle inequality,~\eqref{e.star} and~\eqref{e.Ktheta}, 
we obtain, for every~$r \in  [r_0,r_0/\theta]$, 
\begin{align*}
\frac{\left\| \psi \right\|_{L^2(E_{ r})}}
{\left\| u \right\|_{L^2(E_{ r})}}
\leq
1 + \biggl( \frac{Cr_0}{R} \biggr)^{\!\! \sfrac m2} 
\quad \mbox{and} \quad 
\frac{\left\| u \right\|_{L^2(E_{\theta r})}}
{\left\| \psi \right\|_{L^2(E_{\theta r})}}
\leq
1 + \biggl( \frac{Cr_0}{R} \biggr)^{\!\! \sfrac m2} 
\,.
\end{align*}
For the smallest ellipsoid~$E_{\theta^2 r}$, we do not initially bound the ratio since, for~$r \in  [r_0,r_0/\theta]$, we have~\eqref{e.Ktheta} only with~$E_{\theta r}$ on the right side rather than~$E_{\theta^2r}$. So we must make do with the estimate
\begin{align*}
\left\| \psi \right\|_{L^2(E_{\theta^2 r})}
\leq
\left\| u \right\|_{L^2(E_{\theta^2 r})}
+
\biggl( \frac{Cr_0}{R} \biggr)^{\!\! \sfrac m2} 
\left\| u \right\|_{\underline{L}^2(E_{\theta r})}
\,.
\end{align*}
Combining these with~\eqref{e.apply.threeball.Am} and the triangle inequality yields, 
for~$r \in  [r_0,r_0/\theta]$, 
\begin{align*}
\lefteqn{
\left\| u \right\|_{L^2(E_{\theta r})}^2
} \qquad & 
\\ & 
\leq
\biggl(1 + \biggl( \frac{Cr_0}{R} \biggr)^{\!\! \sfrac m2} \, \biggl)^{\!\!2}
\left\| \psi \right\|_{L^2(E_{\theta r})}^2
\\ & 
\leq
\biggl(1 + \biggl( \frac{Cr_0}{R} \biggr)^{\!\! \sfrac m2} \!\! + \frac{Cm^4}{\theta^2 r} \biggl)
\left\| \psi \right\|_{L^2(  E_{\theta^2 r}) } \! 
\left\| \psi \right\|_{L^2(E_{r})}
\\ & 
\leq
\biggl(1 + \frac{Cm^4}{\theta^2 r} \biggl)
\left\| u \right\|_{L^2(E_{\theta^2 r})} \!
\left\| u \right\|_{\underline{L}^2(E_{r})}
+
\theta^{-\sfrac d2}
\biggl( \frac{Cr_0}{R} \biggr)^{\!\! \sfrac m2} \!\!
\left\| u \right\|_{\underline{L}^2(E_{\theta r})} \!
\left\| u \right\|_{\underline{L}^2(E_{r})}
\,.
\end{align*}
This implies that
\begin{align*}
\left\| u \right\|_{L^2(E_{\theta r})}
\leq
\biggl(1 + \frac{Cm^4}{\theta^2 r} \biggl)
\left\| u \right\|_{L^2(E_{\theta^2 r})}^{\sfrac 12} \!
\left\| u \right\|_{\underline{L}^2(E_{r})}^{\sfrac 12}
+
\theta^{-\sfrac d2}
\biggl( \frac{Cr_0}{R} \biggr)^{\!\! \sfrac m4} \!\!
\left\| u \right\|_{\underline{L}^2(E_{r})}
\,.
\end{align*}
By using~\eqref{e.double.induct.hyp} once more with~$r \geq r_0$, 
we may absorb the second term on the right side, provided that $Cr_0/R$ is sufficiently small; this can be assured if we shrink~$\theta$, since~$r_0\leq \theta^5 R$, and $K$ is chosen large compared to~$L$. 
Indeed:
\begin{align*}
\theta^{-\sfrac d2}
\biggl( \frac{Cr_0}{R} \biggr)^{\!\! \sfrac m4} \!\!
\left\| u \right\|_{\underline{L}^2(E_{r})}
\leq
LM
\theta^{-\sfrac d2}
( C\theta^5)^{\sfrac m4} 
\left\| u \right\|_{\underline{L}^2(E_{\theta r})}
\leq 
\frac 12
\left\| u \right\|_{\underline{L}^2(E_{\theta r})}
\,.
\end{align*}
We deduce therefore that
\begin{align}
\label{e.threesph}
\left\| u \right\|_{L^2(E_{\theta r})}
&
\leq
\biggl( 1 - \theta^{-\sfrac d2}
\biggl( \frac{Cr_0}{R} \biggr)^{\!\! \sfrac m4}  \biggr)^{\!\!-1}
\biggl(1 {+} \frac{Cm^4}{r} \biggr)^{\!\! \sfrac m4} 
\left\| u \right\|_{L^2(E_{\theta^2 r})}^{\sfrac 12} \!
\left\| u \right\|_{\underline{L}^2(E_{r})}^{\sfrac 12}
\\ & \notag
\leq
\biggl(1 + \frac{Cm^4}{r} + \biggl( \frac{Cr_0}{R} \biggr)^{\!\! \sfrac m4}\biggl)
\left\| u \right\|_{L^2(E_{\theta^2 r})}^{\sfrac 12} \!
\left\| u \right\|_{\underline{L}^2(E_{r})}^{\sfrac 12}
\\ &  \notag 
\leq
\biggl(1 + \frac{CK^4 (\log M)^4}{r} 
+ M^{-\frac18K \log \frac R{r_0}} \biggl)
\left\| u \right\|_{L^2(E_{\theta^2 r})}^{\sfrac 12} \!
\left\| u \right\|_{\underline{L}^2(E_{r})}^{\sfrac 12}
\,.
\end{align}
In particular, taking~$K$ sufficiently large, we obtain, for every $r \in [r_0, r_0/\theta]$, 
\begin{align*}
\frac{\left\| u \right\|_{\underline{L}^2(E_{\theta r})}}{ \left\| u \right\|_{\underline{L}^2(E_{\theta^2 r})} }
&
\leq 
\biggl(1 + \frac{CK^4 (\log M)^4}{r} + M^{-100 \log\frac R{r_0}} \biggl)
\frac{\left\| u \right\|_{\underline{L}^2(E_{r})}}{ \left\| u \right\|_{\underline{L}^2(E_{\theta r})} }
\\ & 
\leq
\biggl(1 + \frac{CK^4 (\log M)^4}{r} + M^{-100 \log \frac R{r_0}} \biggl) N(r_0)
\,.
\end{align*}
This completes the proof that~\eqref{e.double.induct.hyp} implies~\eqref{e.double.induct.wrap}, and hence also of the doubling estimate~\eqref{e.doubling.logM4.pf} down to the scale $C(\log M)^4$. 

\smallskip

Consequently, in view of~\eqref{e.threesph}, we also obtain that, for every $r \geq C (\log M)^4$,
\begin{align}
\label{e.threeball.logM4}
\left\| u \right\|_{L^2(E_{\theta r})} 
\leq 
\biggl(1+ \frac{C (\log M)^4}{R} + (CM)^{- 100 \log \frac{R}{r}} \biggr) 
\left\| u \right\|_{L^2(E_{\theta^2 r})}^{\sfrac12} 
\left\| u \right\|_{L^2(E_{r})}^{\sfrac12}
\,.
\end{align}
The three-ellipsoid and doubling inequalities have now been proved on scales larger than~$C(\log M)^4$. In fact, on these scales we have proved better estimates than what is stated in~\eqref{e.threeball.logM} and~\eqref{e.doubling.logM} as the double logarithm is absent. 

\smallskip

On scales between~$C\log M$ and $C(\log M)^4 \wedge R$, we argue differently: in fact, we use the large-scale analyticity estimate once more to crudely jump from scale~$C(\log M)^4$ all the way down to scale $C\log M$ in a single step. 
We therefore consider the case that $R \in [ C (\log M), \infty)$ for some large constant~$C$, define
\begin{align*}
R_0 = C (\log M)^4 \wedge R,
\end{align*}
and assume that the solution~$u$ satisfies the 
doubling inequality
\begin{align*}
\left\| u \right\|_{L^2(E_{R_0})}
\leq
M
\left\| u \right\|_{L^2(E_{\theta R_0})}
\,.
\end{align*}
Select $r\in [C \log M, R]$. Apply the large-scale analyticity estimate with $m= C \log M \leq cr \leq \theta R$ to obtain~$\psi \in \A_m$ such that
\begin{align}
\label{e.star.again}
\left\| u - \psi \right\|_{\underline{L}^2(E_{\theta R_0})}
\leq 
( C\theta)^{ m+1}
\left\| u  \right\|_{\underline{L}^2(E_{R_0})}
\,.
\end{align}
We deduce that 
\begin{align*}
\left\| u \right\|_{L^2(E_{\theta R_0})}
\leq 
\left\|  \psi \right\|_{\underline{L}^2(E_{\theta R_0})}
+
M ( C\theta )^{m+1}
\left\| u  \right\|_{\underline{L}^2(E_{R_0})}.
\end{align*}
Since $\theta=c$ and $m\geq C\log M$ with~$C$ as large as we like, we can absorb the second term on the right side; using then~\eqref{e.growth.psi}, we obtain
\begin{align*}
\left\| u \right\|_{L^2(E_{\theta R_0})}
\leq 
2\left\|  \psi \right\|_{\underline{L}^2(E_{\theta R_0})}
&
\leq
\biggl(\frac{CR_0}{r} \biggr)^{\!\!m}
\left\| u \right\|_{L^2(E_{r})}
=
M^{C \log \frac{R_0}{r} }
\left\| u \right\|_{L^2(E_{r})}
\,.
\end{align*}
Using the doubling assumption once more, we obtain
\begin{align*}
\frac{\left\| u \right\|_{L^2(E_{R_0})}}
{\left\| u \right\|_{L^2(E_{r})}}
\leq
M^{C \log \frac{R_0}{r} }
\,.
\end{align*}
Giving up volume factors, and using that 
\begin{align}
\label{e.notethat}
\log \tfrac{R_0}{r} 
\leq
(C\log \log M) \wedge \bigl( \log \tfrac{R}{r} \bigr)
\,,
\end{align}
we deduce that, for any $s_1, s_2 \in [r,R_0]$, 
\begin{align}
\label{e.wallop}
\frac{\left\| u \right\|_{L^2(E_{s_1})}}
{\left\| u \right\|_{L^2(E_{s_2})}}
\leq 
\biggl( \frac {R_0}r \biggr)^{\!\! d}
\frac{\left\| u \right\|_{L^2(E_{R})}}
{\left\| u \right\|_{L^2(E_{r})}}
&
\leq 
\biggl( \frac {R_0}r \biggr)^{\!\! d}
M^{C \log \frac{R_0}{r} }
\leq
M^{C \log \frac{R_0}{r} } 
\,.
\end{align}
The proof of the theorem is now complete. 
\end{proof}

\begin{remark}
\label{r.logM4}
Is the prefactor constant~$M^{C \log \log M}$ appearing on the right side of the estimates in Theorem~\ref{t.threeball} optimal? In the proof above we showed that, under the assumptions of Theorem~\ref{t.threeball}, on scales larger than $r\geq C(\log M)^4$,
\begin{align}
\label{e.threeball.logM4.2}
\left\| u \right\|_{L^2(E_{\theta r})} 
\leq 
\biggl(1+ \frac{C (\log M)^4}{r} + (CM)^{- 100 \log \frac{R}{r}} \biggr) 
\left\| u \right\|_{L^2(E_{\theta^2 r})}^{\sfrac12} 
\left\| u \right\|_{L^2(E_{r})}^{\sfrac12}
\end{align}
and
\begin{align}
\label{e.doubling.logM4.pf.2}
\sup_{r \in [ C(\log M)^4 , R]}
\frac{\left\| u \right\|_{\underline{L}^2(E_{r})}}{ \left\| u \right\|_{\underline{L}^2(E_{\theta r})} }
\leq 
M^C\,.
\end{align}
It is natural to wonder whether our estimate of~$C(\log M)^4$ for the minimal scale for such estimates can perhaps be improved to~$C\log M$. 
In other words, can Theorem~\ref{t.threeball} be improved by removing the double logarithms of~$M$? 

\smallskip

On the contrary, we conjecture that the optimal minimal scale for estimates like~\eqref{e.threeball.logM4.2} and~\eqref{e.doubling.logM4.pf.2} is~$C(\log M)^p$ for some $p\in (1,2)$. In particular, we expect that constant in Theorem~\ref{t.threeball}, with its $C\log\log M$ exponent, is sharp. 
This question seems to reduce to the question of the scale at which~$\mathscr{A}$-harmonic polynomials satisfy a doubling inequality, and resolving it one way or the other, at least by the methods in this paper, 
would require some nontrivial information about the higher-order homogenized tensors---for instance, whether they are generic or rather possess some unexpected special structure beyond what is currently known. 
\end{remark}

\footnotesize

\subsection*{Acknowledgments}
S.A.~was partially supported by NSF grant DMS-2000200. T.K.~was supported by the Academy of Finland and the European Research Council (ERC) under the European Union's Horizon 2020 research and innovation programme (grant agreement No 818437). C.S.~was partially supported by NSF grant DMS-2137909.

\bibliographystyle{abbrv}
\bibliography{doubling}
\end{document}